\documentclass[12pt, reqno, twoside, letterpaper]{amsart}

\usepackage{paperstyle}

\usepackage{euscript}

\newcommand{\euC}{\ensuremath{\EuScript{C}}}   
\newcommand{\euG}{\ensuremath{\EuScript{G}}}   
\newcommand{\euM}{\ensuremath{\EuScript{M}}}   
\newcommand{\euR}{\ensuremath{\EuScript{R}}}   
\newcommand{\model}{{\boldsymbol\euM}}
\newcommand{\BFTmodel}{{\boldsymbol\euR}}
\newcommand{\cramermodel}{{\boldsymbol\euC}}
\newcommand{\granvillemodel}{{\boldsymbol\euG}}

\newcommand{\bx}{\boldsymbol{\tt X}}

\newcommand{\bi}{\boldsymbol{\iota}}
\newcommand{\bj}{\boldsymbol{\jmath}}

\newcommand{\PR}{\mathbb{P}}

\makeatletter
\renewcommand{\pmod}[1]{\allowbreak\mkern7mu({\operator@font mod}\,\,#1)}
\makeatother

\renewcommand{\ssum}[1]{\sum_{\substack{#1}}}  
\renewcommand{\sprod}[1]{\prod_{\substack{#1}}}  

\newcommand{\order}{\asymp}      
\renewcommand{\(}{\left(}
\renewcommand{\)}{\right)}

\newcommand{\pfrac}[2]{\left(\frac{#1}{#2}\right)}  

\begin{document}


\title[Sets satisfying Bateman-Horn]
{Sets of integers satisfying Bateman--Horn statistics}
\author[W.~Banks]{William Banks}

\address{Department of Mathematics, 
 University of Missouri, 
 Columbia MO 65211, USA.}

\email{bankswd@missouri.edu}

\author[K.~Ford]{Kevin Ford}

\address{Department of Mathematics,
 University of Illinois,
 Urbana, IL 61801, USA.}

\email{ford126@illinois.edu}

\thanks{\textbf{Keywords:}
primes, Hardy-Littlewood conjectures, Bateman-Horn conjectures, 
probabilistic models.}

\thanks{\textbf{Acknowledgements:} 
The second author was supported by National Science Foundation grant 
DMS-2301264, a Simons Collaboration grant and by a Simons Fellowship.}

\begin{abstract}
In 1962, Bateman and Horn conjectured precise asymptotics for the count
of positive integers $n\le x$ for which $f_1(n),\ldots,f_k(n)$ are all prime,
where $(f_1,\ldots,f_k)$ is an admissible $k$-tuple of polynomials
in one variable. We prove that certain random sets of integers
almost surely satisfy the Bateman--Horn asymptotics in full generality 
and with a strong error term, where we have replaced
``$f_1(n),\ldots,f_k(n)$ are all prime'' with
``$f_1(n),\ldots,f_k(n)$ all lie in the random set.''
In particular, sets of integers satisfying Bateman--Horn are plentiful.
\end{abstract}

\dedicatory{Dedicated to Roger Heath-Brown on the occasion of his 75th birthday.}

\maketitle


{\large\section{Introduction}}

\subsection{Background}\label{sec:background}
\emph{The Bateman--Horn conjecture} (BHC),
formulated by Bateman and Horn in 1962~\cite{BH62, BH65},
gives a conjectural asymptotic formula for the count of
positive integers $n \le x$ at which the values of a given admissible $k$-tuple
of polynomials are all prime.  BHC is
a far-reaching refinement of several classical conjectures,
each of which
asserts the infinitude of such $n$ in a special case but offers no
asymptotic: \emph{Dickson's prime $k$-tuples conjecture}~\cite{Dickson}
for tuples of linear polynomials, the \emph{Bunyakovsky
conjecture}~\cite{B57} for a single polynomial of arbitrary degree,
and \emph{Schinzel's Hypothesis~H}~\cite{SS58} for tuples of
arbitrary degree. The one classical predecessor that does supply an
asymptotic, the \emph{Hardy--Littlewood prime $k$-tuples
conjecture}~\cite{HL23}, is recovered by BHC in the linear case.

To state the conjecture, let $f_1, \dots, f_k \in \Z[\bx]$
be distinct irreducible polynomials with positive leading
coefficients, and for any prime $p$, let
\[
\nu_p=\nu_p(f_1,\dots,f_k)\defeq\big|\{n\bmod{p}:
f_1(n)\cdots f_k(n)\equiv 0\pmod p\}\big|.
\]
The $k$-tuple $(f_1,\dots,f_k)$ is said to be
\emph{admissible} if $\nu_p < p$
for every prime $p$; this is clearly necessary for
$f_1(n),\dots,f_k(n)$ to be simultaneously prime for 
infinitely many $n$.
The \emph{singular series} (or \emph{Bateman--Horn constant})
is defined by
\[
\fS(f_1,\dots,f_k)
\defeq\prod_p\(1-\frac{\nu_p}{p}\)\(1-\frac{1}{p}\)^{-k}.
\]
The product converges to a positive real number whenever
$(f_1,\dots,f_k)$ is admissible.

\begin{conjecture*}[Bateman--Horn \cite{BH62}]
Let $(f_1,\dots,f_k)\in\Z[\bx]^k$ be an admissible tuple
of distinct polynomials that are
irreducible over $\Q$ and have positive leading coefficients,
and let $d_j\defeq\deg(f_j)$ for each $j$. Then
\begin{equation}\label{BH}
\big|\{n\le x:f_1(n),\dots,f_k(n)\text{~are all prime}\}\big|
\sim\frac{\fS(f_1,\dots,f_k)}{d_1\cdots d_k} \, \frac{x}{(\log x)^k}
\qquad (x \to \infty).
\end{equation}
\end{conjecture*}

\newpage
\noindent For instance:
\begin{enumerate}
\item $k=1$, $f_1(\bx)=\bx$:\quad$\fS(\bx)=1$, and BHC recovers
the Prime Number Theorem.
\item $k=1$, $f_1(\bx)=a\bx+b$ with $\gcd(a,b)=1$:
\quad $\fS(a\bx+b)=\phi(a)/a$, and BHC recovers
Dirichlet's Theorem on primes in arithmetic progressions.
\item $k=2$, $f_1(\bx)=\bx$, $f_2(\bx)=\bx+2$
(twin primes): \quad BHC predicts that
\[
\pi_2(x)\sim \fS(\bx,\bx+2)\cdot \frac{x}{\log^2 x}
= 2\prod_{p\ge 3}\frac{p(p-2)}{(p-1)^2}
\cdot\frac{x}{(\log x)^2}\qquad(x\to\infty).
\]
\item $k=1$, $f_1(\bx)=\bx^2+1$ (Landau's fourth problem):
\quad $d_1=2$, and since $-1$ is a quadratic
residue modulo $p$ if and only if $p\equiv 1\pmod 4$, we have
\[
\fS(\bx^2+1)=\prod_{p}\(1-\frac{\chi_{-4}(p)}{p-1}\),
\]
where $\chi_{-4}$ is the nonprincipal character
modulo $4$; BHC predicts that
\[
\big|\{n\le x:n^2+1\text{~prime}\}\big|
\sim\frac{\fS(\bx^2+1)}{2}\cdot\frac{x}{\log x}\qquad(x\to\infty).
\]
\end{enumerate}

The only established case of BHC is $k=1$ with $f_1$ linear;
in every other case, even the infinitude of $n$ for which
$f_1(n),\ldots,f_k(n)$ are simultaneously prime remains open.
For fixed $x$, however, the formula \eqref{BH} is known to be true for ``most'' 
polynomials in various senses
(where the exceptional set depends on $x$). 
In the case of linear polynomials, see work of Tchudakoff \cite{Chudakov},
Lavrik \cite{Lavrik}, Balog \cite{Balog}, Maier and Pomerance \cite{MP},
and Kawada \cite{Kawada}.
For polynomials of arbitrary degree,  such results are far more recent; see
work of Skorobogatov and Sofos \cite{SkSo}, Browning, Sofos and Ter\"{a}v\"{a}inen
\cite{BST}, and Kravitz, Woo and Xu \cite{KWX}.

One can also ask about versions of \eqref{BH} with another
subset of $\N$ in place of the primes,
natural candidates being random sets produced to mimic the behavior of primes. 
The Cram\'er model~\cite{Cramer}, however, fails to capture
the local behavior of the primes, as we now explain.
Cram\'er's random set $\cramermodel$
is chosen by selecting each integer $n\ge 3$ to lie in $\cramermodel$
independently and with probability $1/\log n$. An easy calculation
reveals that the analog of the left side of \eqref{BH} is almost surely
asymptotic to $x/(d_1\cdots d_k\log^k x)$ for every $k$-tuple of distinct
polynomials $f_1,\ldots,f_k$, whether admissible or not. 
In particular, Cram\'er's model predicts infinitely many
simultaneous prime values even for inadmissible tuples, and even if a tuple
is admissible, the singular series $\fS(f_1,\ldots,f_k)$ is absent.
The random models of Granville~\cite{Granville} and of
Banks, Ford, and Tao~\cite{BanksFordTao} were designed to overcome 
these flaws, at least for linear shifts $f_j(\bx)=\bx+h_j$.

Our aim here is to establish an \emph{almost-sure}
version of BHC in variants of the probabilistic prime models of Granville \cite{Granville} and of
Banks, Ford, and Tao~\cite{BanksFordTao}.  We denote these random
sets as $\model_1$ and $\model_2$ and define them below.
Our theorem is stated in terms of a more accurate conjectured asymptotic for the
left side of \eqref{BH}, namely
\[
M(f_1,\ldots,f_k;x) \defeq \fS(f_1,\dots,f_k)
\int_2^x \frac{\dd u}{(\log f_1(u))\cdots (\log f_k(u))}.
\]
In the special case $k=1$, $f_1(\bx)=\bx$,
this is Gauss's approximation $\int_2^x \dd u/\log u$ for the number of primes below $x$.

\medskip
\newpage

\begin{theorem}\label{thm:main}
Fix $r\in \{1,2\}$.
Almost surely, the following holds.  For every $k \ge 1$ and every admissible tuple
$(f_1,\dots,f_k)\in\Z[\bx]^k$ of distinct polynomials that are
irreducible over $\Q$ and have positive leading coefficients,
\be\label{eq:main}
\big|\{n\le x:f_1(n),\dots,f_k(n)\in\model_r\}\big| = 
M(f_1,\ldots,f_k;x) + O\Big( x\,\er^{-(\log x)^{1/3}}\Big).
\ee
\end{theorem}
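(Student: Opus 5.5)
Note first that the models $\model_1,\model_2$ are defined only later in the paper. I will therefore assume they have the usual Granville/Banks--Ford--Tao shape. There is a sieving parameter $z=z(n)$ growing with $n$, for instance $z\approx \er^{(\log n)^{c}}$. In $\model_1$, an integer $n$ is kept only if it has no prime factor $p\le z(n)$. In $\model_2$, $n$ is kept only if $n\not\equiv a_p\pmod p$ for all $p\le z(n)$, with the residues $a_p$ chosen uniformly at random. In either model the survivors are then retained independently with probability $\theta_n\approx (\log n)^{-1}\prod_{p\le z(n)}(1-1/p)^{-1}$.

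The plan is a first-moment / second-moment (or higher-moment) argument, followed by a union bound over all tuples. Fix an admissible tuple and a dyadic range $n\in(y,2y]$. Condition on the sieving data (the $a_p$ for $\model_2$). The count $N$ of $n$ with all $f_j(n)$ in the set is then a sum of indicators. These are independent except when $f_i(n)=f_j(m)$ for some pairs $(n,m)$, and such coincidences are rare: for fixed $i,j$ there are $O(\sqrt y)$ or $O(y^{1-\delta})$ of them unless $f_i$ and $f_j$ are related by a linear change of variable, and even then only $O(y)$ overlaps with bounded multiplicity.

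The conditional mean is $\sum_n \prod_j \theta_{f_j(n)}\mathbf 1[f_j(n)\text{ survives the sieve}]$. Since $\theta_{f_j(n)}\approx (\log f_j(n))^{-1}\prod_{p\le z}(1-1/p)^{-1}$, what must be shown is that the number of $n\le y$ with $\prod_j f_j(n)$ avoiding the sieved residue classes mod every $p\le z$ equals $y\prod_{p\le z}(1-\nu_p'/p)$ up to a tiny error. Here $\nu_p'$ equals $\nu_p$ in the deterministic model, and for $\model_2$ it is the number of $n$ with some $f_j(n)\equiv a_p$.

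\begin{itemize}
\item For $\model_1$, this is a fundamental-lemma sieve computation with sieve level $z$. Because $z\le y^{o(1)}$, the error is $y\,\er^{-c\log y/\log z}$ plus the remainder terms.
\item For $\model_2$, I instead average over the $a_p$. The expectation of $\prod_p(1-\nu_p'/p)$ gives exactly $\prod_p(1-\nu_p/p)$, because averaging over $a_p$ amounts to shifting the tuple. I then use the independence over $p$ to show concentration.
\end{itemize}

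Multiplying by $\prod_{p\le z}(1-1/p)^{-k}$ yields the truncated singular series, and its tail $\sum_{p>z}O(1/p^{2})$-type errors (together with the $\nu_p=k$ generic behavior) are negligible. Summing dyadically and comparing $\prod_j\log f_j(n)$ with the integral gives $M(f_1,\ldots,f_k;x)$.

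For concentration, use a Bernstein/Chernoff inequality on the independent retention variables. The dependencies from coincidences $f_i(n)=f_j(m)$ are handled by splitting into boundedly many classes of independent variables. Since the mean is about $y/(\log y)^k$, the deviation exceeds $y\,\er^{-(\log y)^{1/3}}$ with probability at most $\exp(-y^{1-o(1)})$. For $\model_2$, additionally bound the variance of the sieve count over the random $a_p$. This requires pairs $p$ whose contributions must be handled via independence. An exponential moment bound for $\log\prod_p(1-\nu'_p/p)$ gives failure probability at most $\exp(-(\log y)^{A})$ when fluctuations exceed $\er^{-(\log y)^{1/3}}$. This forces the choice of $z$ (roughly $\log z\asymp(\log y)^{2/3}$) so that both the sieve error $\er^{-\log y/\log z}$ and these fluctuations fit under $\er^{-(\log y)^{1/3}}$. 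This balancing is the source of the exponent $1/3$.

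Finally, a Borel--Cantelli argument handles the uncountable-looking quantifier "for every tuple." There are only countably many tuples. For a tuple with height and degree $H$, the failure probability at scale $y$ is summable over $y=2^m$. Borel--Cantelli per tuple then gives almost-sure validity for all tuples simultaneously (a countable intersection), with the implied constant depending on the tuple.

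The main obstacle, I expect, is the uniformity of the sieve step for $\model_2$. One must show that the random sieve reproduces the local factors $1-\nu_p/p$ with error $\er^{-(\log y)^{1/3}}$ for polynomials of arbitrary degree, where $\nu_p$ counts roots of the polynomials mod $p$. My first attempt would be to split the primes $p\le z$ into small ones, treated exactly by CRT, and large ones, controlled by the moment bound that uses $\sum_p \nu_p'^2/p^2$ convergence.
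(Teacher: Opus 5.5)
\textbf{Gap 1: the treatment of $\model_2$.} The step ``averaging over $a_p$ amounts to shifting the tuple, so $\E(1-\nu'_p/p)=1-\nu_p/p$'' is false except for shifts $f_j(\bx)=\bx+h_j$. Changing $a_p$ shifts the \emph{values} $f_j(n)$, not the argument $n$. Since $\sum_{a\bmod p}\#\{n\bmod p: f(n)\equiv a\}=p$, for $k=1$ you get $\E(1-\nu'_p/p)=1-1/p$ rather than $1-\rho_p(f)/p$, so the singular series is lost. Concentration cannot recover it either. For suitable tuples the factor at a fixed small prime is a nondegenerate random variable that does not depend on $y$. For $f_1(\bx)=2\bx+1$, with probability $1/2$ one has $a_2=1$ and the count is identically $0$, so no exponential-moment bound can make the failure probability tend to $0$. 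A random residue sieve at all primes up to $z(n)$ has exactly the defect of $\BFTmodel$ described in \S\ref{sec:BFT}, and the theorem is false for the model you assumed.

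The paper's $\model_2$ (see \eqref{eq:BF-newmodel-defn}) instead removes $0\bmod p$ deterministically for $p\le t(n)=\exp\{(\log n\log\log n)^{2/3}\}$. It uses random $a_p$ only for $t(n)<p\le z(n)=n^{1/\er^\gamma}$, with no retention coin at all. Then $X_n=D_nR_n$ with $D_n$ deterministic. The local factors $1-\nu_p/p$ for $p\le t$ come from summing $D_n$ over short blocks with the fundamental lemma (Lemma \ref{lem:Dn-average}). For $p>t$, each fixed value $f_j(n)$ is hit with probability exactly $1/p$. Hence $\E R_n$ and $\E R_{n_1}R_{n_2}$ are Mertens-type products $\prod_p(1-1/p)^{m_p}$, with $m_p$ the number of values sieved at $p$. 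The only exceptions are the $O(\log n)$ primes dividing the product $\Delta$ of differences of the values; these all exceed $t$ and cost only $1+O(\log n/t)$, while pairs in $\cD$ are bounded trivially. Since all the randomness sits in the shared $a_p$, ``condition on the sieve and apply Chernoff'' has nothing to act on. Concentration must come from this pair-correlation estimate, i.e.\ the variance bound of Corollary \ref{cor:variance}, fed into Chebyshev and Borel--Cantelli.

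\textbf{Gap 2: the tail of the singular series.} This affects $\model_1$ too. The tail is not controlled by ``$\sum_{p>z}O(1/p^2)$ plus $\nu_p=k$ generically.'' For nonlinear $f_j$ the root counts $\rho_p(f_j)$ never settle down (e.g.\ $\rho_p(\bx^2+1)\in\{0,2\}$ for odd $p$) and equal $1$ only on average. So $\prod_{p>z}(1-\nu_p/p)(1-1/p)^{-k}$ converges only conditionally. Any rate needs the prime ideal theorem with error term (Lemma \ref{lem:PIT}), giving $1+O(\er^{-c_f\sqrt{\log z}})$ with an unspecified tuple-dependent $c_f$. The exponent $1/3$ comes from balancing this error against the fundamental-lemma error $\er^{-\frac12 u\log u}$, $u=\log w/\log t$, not from fluctuations of a random product. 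The extra $(\log\log x)^{2/3}$ in $\log t(x)$ is what lets both errors beat $\er^{-C(\log x)^{1/3}}$ for every fixed $C$, whatever $c_f$ is. With your weaker form $\er^{-c\log y/\log z}$ and $\log z\asymp(\log y)^{2/3}$, the constraints on the implied constant pull in opposite directions.

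Apart from this, your $\model_1$ plan is a legitimate alternative to the paper's: a Chernoff bound over a bounded-degree dependency graph in place of Chebyshev. Two repairs are needed. First, apply the sieve on short blocks $(v,v+w]$ with $w\le v^{2/3}$. The thresholds $t(f_j(n))$ move with $n$, and on a dyadic range the Mertens factor changes by $1+O(1/\log y)$; the paper sandwiches $D_n'\le D_n\le D_n''$ and uses \eqref{eq:Z-disjoint} for the $j$-dependent thresholds. Second, control the count on a mesh much finer than $y=2^m$, e.g.\ $x_{m+1}=x_m+x_m\er^{-(\log x_m)^{1/3}}$, to reach intermediate $x$.
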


\textbf{Remark.}  The main term $M(f_1,\ldots,f_k;x)$ is $\gg x/(\log x)^k$,
hence the error term in Theorem \ref{thm:main} is, in particular, smaller by
an arbitrary power of $\log x$.  We have not attempted to optimize the error term,
nor do we address uniformity in tuples $(f_1,\dots,f_k)$.

\medskip

\subsection{The Granville random model}

A flaw in the Cram\'er model is that, almost surely, the set is uniformly distributed
modulo $p$ for every prime $p$. The set of actual primes is, by contrast, uniformly 
distributed among the residues $\{1,2,\ldots,p-1\}$ modulo~$p$ for each prime.
With $x$ fixed, Granville \cite{Granville} proposed modifying Cram\'er's model to 
capture this property, by taking  $y=(\log x)^{1-o(1)}$ and choosing
each $n$ to lie in a random set $\granvillemodel$ with probability
\[
\begin{cases}
0 &\quad\text{ if } n \text{ has a prime factor } \le y, \\
(\Theta_y\log n)^{-1} &\quad\text{ otherwise},
\end{cases}
\]
where
\[
\Theta_z \defeq \prod_{p\le z} \(1-\frac1{p} \).
\]
Mirroring the primes,
the set $\granvillemodel$ avoids the residue class $0\bmod p$
for primes $p\le y$ and, like the 
Cram\'er model, has the same global distribution as the primes. 
One thus expects that $\granvillemodel$ will satisfy the analog of 
\eqref{BH}, and this has been worked out by
T\'afula~\cite{Tafula}.

\medskip

\subsection{The Banks, Ford, Tao random model}\label{sec:BFT}
The random model of \cite{BanksFordTao} corrects the flaw in
Cram\'er's model in a different way using a \emph{random residue sieve}.
A set of linear shifts $f_j(\bx)=\bx+h_j$, $1\le j\le k$, is admissible
if $\cH\defeq\{h_1,\ldots,h_k\}$ satisfies $\nu_p=|\cH\bmod{p}|<p$ for every prime $p$.
The singular series $\fS(f_1,\dots,f_k)$ is closely approximated by
its truncation $V_\cH(z)\,\Theta_z^{-k}$
(see~\cite[Lemma~3.4]{BanksFordTao}), where
\[
V_\cH(z)\defeq\prod_{p\le z}\(1-\frac{|\cH\bmod{p}|}p\).
\]
If $z=z(x) \sim x^{1/\er^{\gamma}}$ then $\Theta_{z(x)}^{-1} \sim \log x$
and the right side of \eqref{BH} is $\sim x\, V_\cH(z(x)).$
Furthermore, $V_\cH(z)$ admits the probabilistic formulation
$V_\cH(z)=\P(\cH\subseteq\cS_z)$, where $\cS_z$ is the random sifted set
\[
\cS_z\defeq\Z \setminus \bigcup_{p\le z} (a_p\bmod{p}),
\]
with $a_p\bmod{p}$ independent and uniformly distributed over 
residue classes for each prime $p\le z$.
This shows that for admissible $\cH$, \eqref{BH} becomes
\[
\big|\{n\le x:n+h\text{~is prime for every~}h\in\cH\}\big|
\sim x\,\P(\cH\subseteq \cS_{z(x)}).
\]
In other words, \eqref{BH} asserts that the probability
that a random shift of $\cH$ consists entirely of primes is asymptotically
equal to the probability that $\cH$ lies in a randomly sifted set.
Motivated by this probabilistic interpretation, the random set
\[
\BFTmodel\defeq\{n\ge\er^2:n\in\cS_{z(n)}\},
\]
was introduced in~\cite{BanksFordTao} and has proved to be
a useful model for the primes, particularly for the analysis
of local behavior such as prime gaps.
With the specific choice of $z(x)$ in \cite{BanksFordTao},
the random $\BFTmodel$ has extremely good
Hardy--Littlewood distributions with linear shifts.  In fact, almost surely,
for $f_j(\bx) = \bx + h_j$,
\[
\big|\{n\le x:f_1(n),\ldots,f_k(n) \in \BFTmodel\}\big| = 
M(f_1,\ldots,f_k;x) + O(x^{1/2+o(1)})
\]
uniformly in a wide range of tuples $h_1,\ldots,h_k$.
However, $\BFTmodel$ fails the analog of \eqref{BH} if some $f_i$
is nonlinear or linear with coefficient larger than 1.
For example, while BHC gives an asymptotic formula for the number of primes
below $x$ of the form $4n+1$, with probability $1/2$ the random set $\BFTmodel$
contains no odd numbers, and hence no integers of the form $4n+1$.

\subsection{Our random sets}\label{sec:our-models}
Both of our random sets sieve out the residue class $0\bmod p$ for every
prime $p\le t(n)$, where
\begin{equation}\label{eq:tx defn}
t(x)\defeq \exp \Big\{ (\log x\log\log x)^{2/3}\Big\}.
\end{equation}
This shared feature ensures that the small-prime divisibility behavior
of $f(n)$ matches that of a prime for any $f\in\Z[\bx]$ with positive leading
coefficient; in particular, this repairs the flaw exhibited by
$\BFTmodel$ in the $4n+1$ example at the
end of \S\ref{sec:BFT}.

The first random set, $\model_1$, is a variant of Granville's random
set in which the threshold $y = (\log x)^{1-\eps}$ is replaced by $t(n)$.
For each $n \ge 10$, independently of all other integers, we include
$n \in \model_1$ with probability
\[
\begin{cases}
0 &\quad\text{if $n$ has a prime factor $\le t(n)$,} \\[2pt]
(\Theta_{t(n)}\log n)^{-1} &\quad\text{otherwise.}
\end{cases}
\]
Independently of our work, T\'afula \cite{Tafula} used a similar 
construction, but with $t(n)$ replaced by $(\log n)/\log\log\log n$,
and established a version of Theorem \ref{thm:main} for this random
set with a weak error term $O(x/(\log^k x \log\log x))$.

Our second random set $\model_2$ is a variant of the Banks--Ford--Tao random set
$\BFTmodel$. For each prime $p$, let $a_p\bmod{p}$ be a
uniformly chosen residue class modulo $p$,
with the choices independent from one prime to the next.
For real numbers $2\le t<z$, we denote
\[
\cS_{t,z}\defeq\left\{n\in\Z:
\begin{array}{r@{\,}l@{\quad}l}
n & \not\equiv 0\pmod{p}    & \text{for } p\le t\\
n & \not\equiv a_p\pmod{p}  & \text{for } t<p\le z
\end{array}
\right\}.
\]
The outer threshold $z(x)\defeq x^{1/\er^{\gamma}}$ 
(which is similar to the choice made in \cite{BanksFordTao}) is calibrated so that
$\Theta_{z(x)}^{-1}\sim \log x$, matching the density of primes below $x$;
with this choice, we put
\be\label{eq:BF-newmodel-defn}
\model_2\defeq\{n\ge 10 : n\in\cS_{t(n),z(n)}\}.
\ee

The two models differ structurally in one crucial respect.  In $\model_1$
the events $\{n\in\model_1\}$ are mutually independent across $n$, so
the moment calculations underlying Theorem~\ref{thm:main} reduce to
single-variable estimates.  In $\model_2$, by contrast, the residues
$a_p$ are shared across all $n$, so the events $\{n\in\model_2\}$ are
correlated; quantifying this correlation is the principal technical
task of the paper, and is carried out in~\S\ref{sec:second-model}.
The payoff is that $\model_2$ is expected to be a more faithful model
for prime-gap statistics and other local questions, such as the
prime-gap moment conjectures of Heath-Brown~\cite{HB82}.

{\large\section{Preliminaries}}

In this section we gather analytic estimates used in the proof of
Theorem~\ref{thm:main} and fix some notational conventions used in the sequel.

\bigskip
\subsection{Analytic tools}

Our first result is a standard application of Landau's Prime Ideal Theorem;
see, for example, \cite[pp. 33--38]{CM}.

\begin{lemma}\label{lem:PIT}
Let $f\in \Z[\bx]$ be irreducible over $\Q$ and non-constant.  Then
there is a constant $c_f > 0$ such that
\[
\prod_{p\le x}\(1-\frac{\rho_p(f)}{p}\)
=\frac{\er^{-\gamma}\fS(f)}{\log x}\(1+O\big(\er^{-c_f\sqrt{\log x}}\,\big)\),
\]
where
\[
\rho_p(f)\defeq\big|\{n\bmod{p}:f(n)\equiv 0\pmod p\}\big|.
\]
\end{lemma}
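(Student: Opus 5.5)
We factor the product against Mertens' product and reduce everything to the Prime Ideal Theorem for $K=\Q(\alpha)$, where $\alpha$ is a root of $f$. Writing
\[
\prod_{p\le x}\(1-\frac{\rho_p(f)}{p}\)=\Theta_x\prod_{p\le x}\(1-\frac{\rho_p(f)}{p}\)\(1-\frac1p\)^{-1},
\]
Mertens' theorem with the de la Vallée Poussin error term gives $\Theta_x=\er^{-\gamma}(\log x)^{-1}\big(1+O(\er^{-c\sqrt{\log x}})\big)$. It therefore suffices to show that the second product converges to $\fS(f)$ with tail $O(\er^{-c_f\sqrt{\log x}})$.

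For the tail, take logarithms. For $p\ge p_0(f)$ we have $\rho_p\le \deg f$, so each factor equals $1-(\rho_p-1)/p+O(1/p^2)$ and
\[
\log\prod_{p>x}\(1-\frac{\rho_p}{p}\)\(1-\frac1p\)^{-1}=-\sum_{p>x}\frac{\rho_p(f)-1}{p}+O\Big(\frac1x\Big).
\]
Finitely many primes, namely those dividing the leading coefficient or the discriminant of $f$, together with those where $\rho_p$ could equal $p$, lie inside $p\le x$ for large $x$ and do not affect the tail. If $(f)$ is inadmissible, both sides vanish for large $x$, which is consistent.

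The key step is to bound $\sum_{p>x}(\rho_p-1)/p$. By Dedekind–Kummer, for $p\nmid \mathrm{disc}(f)\cdot\mathrm{lc}(f)$ the number $\rho_p(f)$ equals the number of degree-one prime ideals of $\mathcal O_K$ above $p$. Hence
\[
\sum_{p\le u}\rho_p(f)=\#\{\mathfrak p: N\mathfrak p\le u\}+O(\sqrt u)+O_f(1),
\]
where the $O(\sqrt u)$ absorbs the ideals of norm $p^j$ with $j\ge2$. Landau's Prime Ideal Theorem with classical zero-free region gives $\pi_K(u)=\mathrm{li}(u)+O(u\,\er^{-c\sqrt{\log u}})$, and the prime number theorem gives the same for $\pi(u)$. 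So
\[
E(u)\defeq\sum_{p\le u}(\rho_p-1)\ll u\,\er^{-c\sqrt{\log u}}.
\]
Partial summation then yields
\[
\sum_{p>x}\frac{\rho_p-1}{p}=-\frac{E(x)}{x}+\int_x^\infty\frac{E(u)}{u^2}\,\dd u\ll \er^{-c'\sqrt{\log x}}.
\]
The integral is handled by the substitution $u=\er^{v}$, which gives $\int_{\log x}^\infty \er^{-c\sqrt v}\,\dd v\ll \sqrt{\log x}\,\er^{-c\sqrt{\log x}}$; the factor $\sqrt{\log x}$ is absorbed by shrinking $c$.

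The same estimate shows that the infinite product converges conditionally, so its value is $\fS(f)$ by definition. Combining the pieces gives the lemma with $c_f$ slightly smaller than the Landau and Mertens constants.

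The main obstacle is the correct identification of $\rho_p$ with degree-one prime ideals, together with the quantitative Prime Ideal Theorem. The remaining steps are routine.
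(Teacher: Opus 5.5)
Your proposal is correct and is the standard argument the paper has in mind. The paper gives no proof of its own, only citing the lemma as a standard application of Landau's Prime Ideal Theorem (Cojocaru--Murty, pp.~33--38), and your steps carry out exactly that application: reduction to Mertens' product, identification of $\rho_p(f)$ with degree-one prime ideals via Dedekind--Kummer, then partial summation against the classical error terms in the Prime Ideal Theorem and the prime number theorem. One detail is worth stating explicitly: for non-monic $f$ with leading coefficient $a$, Dedekind--Kummer should be applied to the monic polynomial $a^{d-1}f(\bx/a)$ with root $a\alpha$, and your exclusion of $p\mid \mathrm{lc}(f)\,\mathrm{disc}(f)$ already covers this.
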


The special case $f(\bx)=\bx$ of Lemma~\ref{lem:PIT}, in which $\fS(\bx)=1$,
is a strong form of Mertens' third theorem,
deducible from the prime number theorem with classical error term; a stronger error
term is available using the Vinogradov--Korobov zero-free region.

We remark that, under the Generalized Riemann Hypothesis for all
Dedekind zeta functions (including the Riemann zeta function), the error term
in Lemma~\ref{lem:PIT} sharpens to $O_f(x^{-1/2}\log^2 x)$, and a straightforward
modification of the proof of Theorem~\ref{thm:main} yields the
improved error term $O\big(x\,\er^{-(\log x)^{1/2}}\big)$.

The next lemma is a consequence of the Fundamental Lemma of the sieve.

\begin{lemma}\label{lem:fundamental-lemma}
Fix a positive integer $K$, and for each prime $p$ let $I_p$ be a
set of at most $\min(p-1,K)$ residue classes modulo $p$. For any
real $v$ and any real $w \ge z \ge 10$, setting
$u \defeq \log w / \log z$, we have
\[
\big|\{v<n\le v+w:n\not\in I_p \text{ for all } p\le z\}\big|
=\Big(1+O_K\big(\er^{-(1/2)u\log u}\big)\Big)\,w\prod_{p\le z}\(1-\frac{|I_p|}{p}\).
\]
\end{lemma}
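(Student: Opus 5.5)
This is the standard Fundamental Lemma of the sieve (in the form of Brun's sieve or the beta-sieve, or Iwaniec's \(\beta\)-sieve with sieve dimension at most $K$), and I would derive it from that source rather than reprove it. The sifted set is $\mathcal A=\{v<n\le v+w\}$. For squarefree $d$ composed of primes $p\le z$, the CRT gives a set of $\prod_{p\mid d}|I_p|=:g(d)d$ residue classes modulo $d$. Each contributes $w/d+O(1)$ elements of $\mathcal A$, so
\[
|\mathcal A_d| = w\,g(d) + r_d, \qquad |r_d|\le \prod_{p\mid d}|I_p| \le K^{\omega(d)}.
\]
The multiplicative density $g(p)=|I_p|/p$ satisfies $g(p)\le \min(1-1/p,K/p)$. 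It therefore obeys the dimension condition $\prod_{w_1\le p<w_2}(1-g(p))^{-1}\le \big(\tfrac{\log w_2}{\log w_1}\big)^{K}\big(1+O_K(1/\log w_1)\big)$ with sieve dimension $K$. One subtlety is that $1-g(p)\ge 1/p$ may be small for small $p$; this only affects constants depending on $K$, since $|I_p|\le p-1$ and $|I_p|=p-1$ is possible only when $p\le K+1$.

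The plan is to invoke the Fundamental Lemma in the form of, e.g., Friedlander--Iwaniec (Opera de Cribro, Lemma 6.3/Theorem 6.9) or Halberstam--Richert Theorem 2.5. For a level $D=z^{s}$ with $s\ge 9K+1$, it gives
\[
S(\mathcal A,z)= w\prod_{p\le z}\Big(1-\frac{|I_p|}{p}\Big)\Big(1+O_K(\er^{-s})\Big)+O\Big(\sum_{d\le D,\ d\mid P(z)}|r_d|\Big),
\]
and the sharper form $O(s^{-s})$ or $\er^{-s\log s}$ is also available. I then choose $D=w^{1/2}$, i.e.\ $s=u/2$. This makes the sieve error $\er^{-(u/2)\log(u/2)}$, which should be compared with the target $\er^{-(1/2)u\log u}$; the difference is a factor $\er^{(u/2)\log 2}$. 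To land exactly on the stated exponent I would take $D=w^{1-\delta}$ with a $\delta$ depending on $K$, or simply note that the $O_K$ form of the Fundamental Lemma gives $\exp(-s\log s+O_K(s))$. It is harmless to adjust, because the main-term product is $\gg_K (\log z)^{-K}$ with an implied constant depending on $K$, and for bounded $u$ the claim is trivial, the left side being at most $w$.

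For the remainder, $\sum_{d\le D}K^{\omega(d)}\ll D(\log D)^{K-1}$. This must be shown to be $\le w\,\er^{-u\log u/2}\prod_{p\le z}(1-|I_p|/p)$, where the product is $\gg_K(\log z)^{-K}$. With $D=w^{1/2}$ the remainder is about $w^{1/2+o(1)}$. The required bound holds since $w^{1/2}=\er^{u\log z/2}$ dominates $\er^{u\log u/2}(\log z)^{K}(\log w)^{K}$, using $\log u\le\log\log w\le \log z$ for $z\ge10$, up to constants.

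The main obstacle is purely bookkeeping: matching the precise exponent $\tfrac12 u\log u$ while balancing the level $D$ against the remainder sum, and handling the regime where $u$ is small. In that regime the error term is $O_K(1)$ times the main term, and the statement follows from the upper-bound sieve together with the trivial lower bound $0$, adjusting the implied constant.
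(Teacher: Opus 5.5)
Your route is the paper's: it also just applies the Fundamental Lemma (\cite[Theorem~3.6]{Ford-sieve-notes} or \cite[Theorem~6.12]{FI}), using $|r_d|\le K^{\omega(d)}\ll d^{1/100}$ and $V(z)=\prod_{p\le z}(1-|I_p|/p)\gg_K(\log z)^{-K}$. Your bookkeeping needs two corrections, though.

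\textbf{The level $D$.} With $D=w^{1/2}$ you get $s=u/2$, and $e^{-s\log s}$ exceeds $e^{-(1/2)u\log u}$ by the unbounded factor $2^{u/2}$. Invoking $e^{-s\log s+O_K(s)}$ does not help, since with $s=u/2$ it still leaves a factor $e^{O_K(u)}$. You need $D=w^{c}$ with a fixed $c>1/2$. Your $w^{1-\delta}$ works for any fixed $\delta<1/2$, independent of $K$. The paper takes $D=w^{2/3}$, so that $s\log s-O_K(s)=\frac{2}{3}u\log u-O_K(u)\ge\frac{1}{2}u\log u$ for $u\ge u_0(K)$, with remainder $\ll D^{1.01}<w^{0.68}$. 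Also, for bounded $u$ the remark that the left side is at most $w$ is not enough, because $V(z)$ can be as small as $(\log z)^{-K}$. What you need there is the upper-bound sieve, as you say at the end.

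\textbf{The remainder comparison.} This is the more serious problem. Your comparison of the remainder with $wV(z)e^{-(1/2)u\log u}$ rests on ``$\log u\le\log\log w\le\log z$''. But $\log\log w\le\log z$ means $w\le e^{z}$, which does not follow from $w\ge z\ge 10$. This cannot be patched, because the lemma as literally stated is false when $u$ is huge compared with $z$. Take $K=1$, $z=10$, $I_p=\{0\}$, $v=0$, $w=210N+1$. The count is $48N+1$ while $wV(z)=48N+\frac{8}{35}$, so the relative error is $\asymp 1/N$. Meanwhile $e^{-(1/2)u\log u}=w^{-\log u/(2\log 10)}=o(1/N)$ once $u>100$.

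So your argument proves the lemma only under an extra hypothesis such as $u\le z^{1/2}$. The paper's one-line proof has the same tacit limitation. Under that hypothesis, $\frac{1}{2}u\log u\le\frac{1}{4}u\log z$, hence $w^{0.32}\gg_K e^{(1/2)u\log u}(\log z)^{K}$, which is exactly what the remainder comparison needs. That restricted form is all the paper uses: in Lemma~\ref{lem:Dn-average}, $u\asymp(\log v)^{1/3}(\log\log v)^{-2/3}$ while $\log z\asymp(\log v\log\log v)^{2/3}$. You should state the restriction explicitly rather than derive it from a false inequality.
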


\begin{proof}
We apply \cite[Theorem~3.6]{Ford-sieve-notes} with $D=w^{2/3}$, noting that the remainders
satisfy $|r_d| \le K^{\omega(d)} \ll d^{1/100}$, where $\omega(d)$ denotes the number of
distinct prime factors of $d$, together with the lower bound
$V(z)=\prod_{p\le z}(1-|I_p|/p) \gg (\log z)^{-K}$.
One may also use \cite[Theorem~6.12]{FI}.
\end{proof} 

\subsection{Setup and notation}
Since the collection of admissible tuples $(f_1,\ldots,f_k)$
(over all $k\ge 1$) is countable, and a countable intersection of
probability-one events has probability one, it suffices to establish
\eqref{eq:main} almost surely for each tuple individually. We
therefore fix an admissible tuple $(f_1,\ldots,f_k)$ from now on,
and for brevity, we write
\[
\fS \defeq \fS(f_1,\ldots,f_k), \qquad M(x) \defeq M(f_1,\ldots,f_k;x).
\]
There is no further loss of generality in assuming that each $f_j$
has all positive coefficients and that the polynomials are ordered
so that
\begin{equation}\label{fi-ordered}
n< f_1(n) < f_2(n) < \cdots < f_k(n) \qquad (n\in\N),
\end{equation}
since this can be arranged by replacing each $f_j$ with the shift
$n \mapsto f_j(n+N)$ for sufficiently large $N$, and relabeling.
Throughout the proof, all implied constants are allowed to depend on
$f_1,\dots,f_k$ but on no other quantity, and this dependence is
suppressed in the notation.

To prove the theorem, we follow the method of \cite[\S 4]{BanksFordTao},
estimating the first and second moments of the sums
\[
\sum_{x<n\le x+y}X_n\qquad(\sqrt{x}<y\le x),
\]
where $\ind{\cdot}$ denotes the indicator function and
\[
X_n\defeq\ind{f_1(n),\dots,f_k(n)\in\model_r}.
\]
In each model we have a factorization $X_n = D_n R_n$
 into a deterministic part $D_n$ and a random part $R_n$. Explicitly,
\be\label{eq:Dn}
D_n \defeq \ind{ \text{for each } j:\;f_j(n)\not\equiv 0\pmod{p}
\text{ for all } p\le t(f_j(n))}.
\ee
For $\model_1$, the $R_n$ are independent Bernoulli random variables with
\begin{equation}\label{eq:Rn-model1}
\PR(R_n=1) = \prod_{j=1}^k \frac{\Theta_{t(f_j(n))}^{-1}}{\log f_j(n)},
\end{equation}
whereas for $\model_2$,
\be\label{eq:Rn-model2}
R_n \defeq \ind{ \text{for each } j:\;f_j(n)\not\equiv a_p\pmod{p} \text{ for all } t(f_j(n))<p\le z(f_j(n))};
\ee
in this case the $R_n$ are \emph{not} independent across $n$, because
the residues $a_p$ are shared.

The following lemma, which applies to both random sets, gives an
average bound on $D_n$ that will be used repeatedly.  For brevity,
we write
\[
\cE(x) \defeq \exp \Big\{ - (\log x)^{1/3}(\log\log x)^{1/6} \Big\}.
\]

\begin{lemma}\label{lem:Dn-average}
Let $v$ be sufficiently large and $v^{1/3} \le w\le v^{2/3}$.  Then
\[
\sum_{v<n\le v+w}D_n=\big\{1+O(\cE(v))\big\}
\frac{\fS\,w\,\er^{-\gamma k}}{\prod_{j=1}^k\log t(f_j(v))}.
\] 
\end{lemma}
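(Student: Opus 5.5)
The plan is to replace the $n$-dependent sieve thresholds $t(f_j(n))$ by fixed thresholds, apply Lemma~\ref{lem:fundamental-lemma}, and then evaluate the resulting Euler product with Lemma~\ref{lem:PIT}.

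\textbf{Step 1: freeze the thresholds.} For $v<n\le v+w$ with $w\le v^{2/3}$, we have $f_j(n)=f_j(v)(1+O(w/v))$. Hence
\[
\log t(f_j(n))-\log t(f_j(v)) \ll (w/v)\,(\log v)^{-1/3}(\log\log v)^{2/3},
\]
which is at most $v^{-1/3}$. So the interval $(t(f_j(v)),\,t(f_j(v+w))]$ has length at most $t\,v^{-1/3+o(1)}$. I will set $t_j\defeq t(f_j(v))$ and sieve $f_j(n)$ by primes $p\le t_j$. The discrepancy comes from $n$ for which some $f_j(n)$ has a prime factor $p$ in that tiny interval. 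I will bound the number of such $n$ trivially by
\[
\sum_p \frac{\deg f_j\,\cdot\, w}{p}+O(1).
\]
The sum over $p$ is $\ll w\cdot v^{-1/3+o(1)}/\log t$, plus $O(\pi(\text{interval}))$ terms of size $O(1)$ each. The latter is tiny compared with $t\,v^{-1/3}$, and $t=v^{o(1)}$, so this is negligible against $w(\log t)^{-k}\cE(v)$ because $w\ge v^{1/3}$.

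\textbf{Step 2: sieve.} I will order by size: $t_1\le\dots\le t_k$. For $p\le t_k$, let
\[
I_p=\{n \bmod p:\ f_j(n)\equiv0 \pmod p \text{ for some } j \text{ with } t_j\ge p\}.
\]
Then $|I_p|\le \sum_j \deg f_j=K$, and $|I_p|\le \nu_p<p$ by admissibility; small primes where $\nu_p$ might exceed $K$ are fine since $|I_p|\le p-1$ anyway, so we can take $K$ large. Applying Lemma~\ref{lem:fundamental-lemma} with $z=t_k$ gives
\[
u=\frac{\log w}{\log t_k}\gg \frac{(\log v)^{1/3}}{(\log\log v)^{2/3}} .
\]
Then $\er^{-u\log u/2}\le \exp\{-c(\log v)^{1/3}(\log\log v)^{1/3}\}$, which is $\le\cE(v)$. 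The main term is $w\prod_{p\le t_k}(1-|I_p|/p)$.

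\textbf{Step 3: evaluate the product.} This is the main bookkeeping step. For $p\le t_1$, $|I_p|=\nu_p$. For $t_{i}<p\le t_{i+1}$, $|I_p|=\rho_p(f_{i+1}\cdots f_k)$. For large $p$, this equals $\sum_{j>i}\rho_p(f_j)$, because distinct irreducibles have common roots mod $p$ only for $p$ dividing a resultant. So the product equals
\[
\prod_{p\le t_1}\Bigl(1-\frac{\nu_p}{p}\Bigr)\prod_{i}\ \prod_{t_i<p\le t_{i+1}}\ \prod_{j>i}\Bigl(1-\frac{\rho_p(f_j)}{p}\Bigr)\bigl(1+O(p^{-2})\bigr).
\]
The $O(p^{-2})$ corrections over $p>t_1$ contribute $1+O(1/t_1)$. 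Regrouping, this becomes
\[
\prod_{p\le t_1}\Bigl(1-\frac{\nu_p}{p}\Bigr)\ \prod_j\ \prod_{t_1<p\le t_j}\Bigl(1-\frac{\rho_p(f_j)}{p}\Bigr).
\]
I will write each inner factor as a ratio of products from Lemma~\ref{lem:PIT} at $t_j$ and $t_1$, giving
\[
\frac{\log t_1}{\log t_j}\bigl(1+O(\er^{-c\sqrt{\log t_1}})\bigr).
\]
For the first factor, I will use
\[
\prod_{p\le t_1}\Bigl(1-\frac{\nu_p}{p}\Bigr)=\fS\,\Theta_{t_1}^{k}\bigl(1+O(1/t_1)\bigr),
\]
since the singular series tail has factors $1+O(p^{-2})$. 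Combined with $\Theta_{t_1}=\er^{-\gamma}(\log t_1)^{-1}(1+O(\er^{-c\sqrt{\log t_1}}))$, this gives $\fS\,\er^{-\gamma k}/\prod_j\log t_j$.

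\textbf{Step 4: check the error.} The error is $\er^{-c\sqrt{\log t_1}}$, which is $\exp\{-c(\log v\log\log v)^{1/3}\}\le\cE(v)$. Here I need to be careful that the exponent $(\log\log v)^{1/3}$ beats $(\log\log v)^{1/6}$; it does. This is why the stated $\cE$ is as given. The only delicate point I anticipate is verifying that the Step 2 sieve error $u\log u$ indeed beats $(\log v)^{1/3}(\log\log v)^{1/6}$: with $u\asymp (\log v)^{1/3}(\log\log v)^{-2/3}$, we have $u\log u\asymp(\log v)^{1/3}(\log\log v)^{1/3}$, which suffices.
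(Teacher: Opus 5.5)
Your outline matches the paper's: Fundamental Lemma with $z=t_k$, the product split at $t_1\le\dots\le t_k$, disjointness of root sets for $p>p_0$, and Lemma~\ref{lem:PIT} for $\prod_{t_1<p\le t_j}(1-\rho_p(f_j)/p)$. However, Step~3 rests on a false claim. You assert $\prod_{p\le t_1}(1-\nu_p/p)=\fS\,\Theta_{t_1}^k(1+O(1/t_1))$ because ``the singular series tail has factors $1+O(p^{-2})$.'' In fact $(1-\nu_p/p)(1-1/p)^{-k}=1-(\nu_p-k)/p+O(p^{-2})$, and for $p>p_0$ one has $\nu_p-k=\sum_j(\rho_p(f_j)-1)$. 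This is not $0$ once some $\deg f_j\ge 2$: it equals $\sum_j(d_j-1)>0$ at every prime splitting completely in the splitting field of $f_1\cdots f_k$, and for $f_1=\bx^2+1$ it is $\chi_{-4}(p)$. So the tail converges only conditionally. Your claim holds only in the linear (Hardy--Littlewood) case, and this is exactly where the Bateman--Horn setting needs the prime ideal theorem. The repair is to write, for $p>p_0$, $1-\nu_p/p=\prod_j(1-\rho_p(f_j)/p)\,(1+O(p^{-2}))$. Then
\[
\prod_{p>t_1}\Bigl(1-\frac{\nu_p}{p}\Bigr)\Bigl(1-\frac1p\Bigr)^{-k}=\Bigl(1+O\Bigl(\frac1{t_1}\Bigr)\Bigr)\prod_{j=1}^k\,\prod_{p>t_1}\Bigl(1-\frac{\rho_p(f_j)}{p}\Bigr)\Bigl(1-\frac1p\Bigr)^{-1},
\]
and each inner product is $1+O(\er^{-c\sqrt{\log t_1}})$ by Lemma~\ref{lem:PIT} for $f_j$ divided by Lemma~\ref{lem:PIT} for $\bx$. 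The paper does the equivalent by taking logarithms in Lemma~\ref{lem:PIT}. The error here is then $O(\er^{-c\sqrt{\log t_1}})$ rather than $O(1/t_1)$. Your Step~4 already shows this is $\ll\cE(v)$, so the lemma follows once this is fixed.

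There is also a smaller slip in Step~1. Since $t(f_j(v))=v^{o(1)}$, the interval $(t(f_j(v)),t(f_j(v+w))]$ has length $v^{-1/3+o(1)}\to 0$. A prime-density bound $\sum_p w/p\ll w\,v^{-1/3+o(1)}/\log t$ is therefore not legitimate: the interval contains at most one prime $p$, and that prime alone can contribute about $w/p$. The correct trivial count is $\ll w/t_1+1$. This is still negligible, because $1/t_1=\exp\{-(\log f_1(v)\log\log f_1(v))^{2/3}\}$ is far smaller than $\cE(v)(\log t_1)^{-k}$, and $w\ge v^{1/3}$. With both repairs your argument is correct. It differs from the paper only in how the $n$-dependent thresholds are handled. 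You freeze them at $t(f_j(v))$ and bound the discrepancy by counting $n$ for which some $f_j(n)$ has a prime factor between the two thresholds. The paper instead sandwiches $D_n'\le D_n\le D_n''$ between the sieves at $t(f_j(v+w))$ and $t(f_j(v))$, evaluates both, and uses $\log t(f_j(v+w))=(1+O(w/v))\log t(f_j(v))$. Your route needs one sieve evaluation plus a short counting argument; the paper's needs no counting but runs the product evaluation twice.
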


\begin{proof}
Since the range of primes in the definition \eqref{eq:Dn} of $D_n$ varies with $n$,
it is convenient to use the inequalities
\be\label{eq:sandwich}
D_n' \le D_n \le D_n'',
\ee
where
\begin{alignat*}{3}
D_{n}'&\defeq\ind{\text{for each }j:
\; f_j(n)\not\equiv 0\pmod{p}\text{ for all }p\le t'_j},
&\qquad t_j'&\defeq t(f_j(v+w)),\\
D_{n}''&\defeq\ind{\text{for each }j:
\;f_j(n)\not\equiv 0\pmod{p}\text{ for all }p\le t''_j},
&\qquad t_j''&\defeq t(f_j(v)).
\end{alignat*}
We bound the averages of $D_n'$ and $D_n''$ using Lemma \ref{lem:fundamental-lemma}.
Recall that
\[
\nu_p\defeq\big|\{n\bmod{p}:
f_1(n)\cdots f_k(n)\equiv 0\pmod p\}\big|,
\]
and let us denote
\[
\cZ_p(f_j)\defeq\{n\bmod{p}:f_j(n)\equiv 0\pmod p\}
\mand
\rho_p(f_j)\defeq\big|\cZ_p(f_j)\big|
\]
for each $j$. Since $f_1, \ldots, f_k$ are distinct irreducibles over $\Q$, 
each pair $f_i, f_j$ with $i \ne j$ is coprime in $\Q[\bx]$, so the sets 
$\cZ_p(f_i)$ and $\cZ_p(f_j)$ are disjoint for all but finitely many primes $p$.
Thus, there is a constant $p_0 = p_0(f_1, \ldots, f_k)$ such that 
\be\label{eq:Z-disjoint}
\cZ_p(f_i)\cap\cZ_p(f_j)=\varnothing \qquad (p>p_0,~i\ne j).
\ee
Now let $v$ be large enough so that $t_1'>p_0$, and suppose $n$ has $D_n'=1$.
For $p \le t_1'$, all $k$ polynomials are being sieved, so $n$ must avoid 
the $\nu_p$ residues in $\bigcup_{j=1}^k \cZ_p(f_j)$.
For $t_{j-1}' < p \le t_j'$ with $2 \le j \le k$, only $f_j, \ldots, f_k$ 
remain active; by \eqref{eq:Z-disjoint}, $n$ avoids precisely
$\rho_p(f_j) + \cdots + \rho_p(f_k)$ residues modulo $p$.

In Lemma \ref{lem:fundamental-lemma}, we take $z = t_k'$. Since $w \ge v^{1/3}$
and $\log t_k'\ll (\log v \log\log v)^{2/3}$ by \eqref{eq:tx defn}, we have
\[
u = \frac{\log w}{\log t_k'} \gg \frac{(\log v)^{1/3}}{(\log\log v)^{2/3}}.
\]
In particular, $u\log u$
substantially exceeds $(\log v)^{1/3}(\log\log v)^{1/6}$, and consequently\linebreak
$e^{-(1/2) u \log u} \ll \cE(v)$.
Lemma \ref{lem:fundamental-lemma} then implies that
\[
\sum_{v<n\le v+w} D_n' = \big\{1+O(\cE(v))\big\} w \prod_{p\le t_1'} \(1-\frac{\nu_p}{p}\) 
\prod_{j=2}^k\; \prod_{t_{j-1}'<p\le t_j'}
\(1- \frac{\rho_p(f_j)+\cdots+\rho_p(f_k)}{p}\).
\]
We introduce a factor $\prod_{p\le t_1'} (1-1/p)^k(1-1/p)^{-k}$,
and complete the partial product
$\prod_{p\le t_1'}(1-\nu_p/p)(1-1/p)^{-k}$ to $\fS$.  The remaining factor is
\begin{align*}
\prod_{p>t_1'} \(1-\frac{\nu_p}{p}\)^{-1}\(1-\frac1{p}\)^k &= \(1+O\pfrac{1}{t_1'}\)
\exp\bigg\{\sum_{j=1}^k \sum_{p>t_1'} \frac{\nu_p(f_j)-1}{p} \bigg\}
=1+O(\cE(v)),
\end{align*} 
taking logarithms in Lemma \ref{lem:PIT} and using partial summation on the double sum.
We also use \eqref{eq:Z-disjoint} to express
$1-(\rho_p(f_j)+\cdots+\rho_p(f_k))/p$ as $\prod_{i\ge j}(1-\rho_p(f_i)/p)$ 
(with a  $1+O(1/t_1')$ error), and obtain
\begin{align*}
\sum_{v<n\le v+w} &D_n' =\big\{1+O(\cE(v))\big\} \fS\,w \(1+O\pfrac{1}{t_1'}\) \prod_{p\le t_1'} \(1-\frac{1}{p}\)^k\;
  \prod_{j=2}^{k} \; \prod_{t_{1}'<p\le t_j'}\(1-\frac{\rho_p(f_j)}{p}\)\\
&=\big\{1+O(\cE(v))\big\} \fS\,w  \pfrac{\er^{-\gamma}}{\log t_1'}^k \; \prod_{j=2}^{k}
  \pfrac{\log t_1'}{\log t_j'}\\
&=\big\{1+O(\cE(v))\big\} \frac{\fS\,w\,\er^{-\gamma k}}{(\log t_1')\cdots (\log t_k')}. 
\end{align*} 
An identical argument gives
\[
\sum_{v<n\le v+w} D_n'' = \big\{1+O(\cE(v))\big\}
 \frac{\fS\,w\,\er^{-\gamma k}}{(\log t_1'')\cdots (\log t_k'')}. 
\]
Since the bound $w\le v^{2/3}$ implies that
\[
\log t_j' = (\log t_j'')\big( 1 + O(w/v) \big) = \big\{1+O(\cE(v))\big\} \log t_j''
\qquad(1\le j\le k),
\]
the two sums $\sum D_n'$ and $\sum D_n''$ agree up to a factor $1+O(\cE(v))$.
Using \eqref{eq:sandwich} and recalling that $t_j''=t(f_j(v))$ for each $j$, we
complete the proof.
\end{proof}

\textbf{Remark.}  The threshold $t(x)$ was chosen to roughly balance the error terms
coming from Lemmas \ref{lem:PIT} and \ref{lem:fundamental-lemma}.

{\large\section{The random set $\model_1$}\label{sec:first-model}}

Our main task is to establish first and second moment bounds for averages of $X_n$.

\bigskip

\begin{lemma}[Moment and variance estimates]\label{lem:M1-moments}
$(i)$ With $x$ sufficiently large in terms of $f_1,\dots,f_k$,
and $\sqrt{x}<y\le x$, we have
\[
\E\bigg\{\sum_{x<n\le x+y}X_n\bigg\}
=\big\{1+O(\cE(x))\big\}M(x,y),
\]
where
\[
M(x,y)\defeq M(x+y)-M(x)=\fS
\int_x^{x+y} \frac{\dd u}{(\log f_1(u))\cdots (\log f_k(u))}.
\]
$(ii)$ For the same $x,y$,
\[
\E\bigg\{\sum_{x<n\le x+y}X_n\bigg\}^2 =\big\{1+O(\cE(x))\big\}M(x,y)^2.
\]
$(iii)$ For the same $x,y$, the variance satisfies
\[
\V\bigg\{\sum_{x<n\le x+y}X_n\bigg\} \ll \cE(x) M(x,y)^2.
\]
\end{lemma}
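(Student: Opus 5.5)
Since $X_n=D_nR_n$ with $D_n$ deterministic and the $R_n$ independent Bernoulli variables with the probabilities \eqref{eq:Rn-model1}, we have $\E X_n = D_n P_n$, where
\[
P_n\defeq\prod_{j=1}^k \frac{\Theta_{t(f_j(n))}^{-1}}{\log f_j(n)}.
\]
For (i), we will split $(x,x+y]$ into consecutive blocks $(v,v+w]$ of length $w\asymp x^{1/2}$, say $w=x^{1/2}$ with a shorter final block. This keeps $x^{1/3}\le w\le x^{2/3}$ for every $v\in[x,2x]$, so Lemma~\ref{lem:Dn-average} applies to each block.

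On a block, $P_n$ is essentially constant. Indeed, $\log f_j(n)=\log f_j(v)+O(w/v)$ and $\log t(f_j(n))$ changes by a factor $1+O(w/v)$. By the Mertens case of Lemma~\ref{lem:PIT}, $\Theta_{t}^{-1}=\er^{\gamma}\log t\,(1+O(\er^{-c\sqrt{\log t}}))$. With $\log t\asymp(\log x\log\log x)^{2/3}$, the error $\er^{-c(\log t)^{1/2}}=\er^{-c'(\log x\log\log x)^{1/3}}$ is $\ll\cE(x)$. This is exactly the reason for the choice of $t$ and the exponent $1/6$ in $\cE$. Hence on each block
\[
P_n=\{1+O(\cE(x))\}\prod_j \frac{\er^{\gamma}\log t(f_j(v))}{\log f_j(v)}.
\]
Multiplying by Lemma~\ref{lem:Dn-average}, the factors $\er^{\pm\gamma k}$ and $\log t(f_j(v))$ cancel. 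We obtain
\[
\sum_{v<n\le v+w}\E X_n=\{1+O(\cE(x))\}\,\fS\,w\prod_j(\log f_j(v))^{-1},
\]
and this equals $\{1+O(\cE(x))\}\,\fS\int_v^{v+w}\prod_j(\log f_j(u))^{-1}\,\dd u$. Summing over the blocks gives (i). The final partial block has length $\ge x^{1/3}$ after merging it with its neighbour if needed, or contributes only $O(x^{1/2})\ll \cE(x)M(x,y)$ trivially, since $y>\sqrt x$ gives $M(x,y)\gg y/(\log x)^k$.

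For (ii) and (iii), independence of the $R_n$ gives
\[
\V\Big\{\sum X_n\Big\}=\sum_n D_n(P_n-P_n^2)\le\sum_n\E X_n\ll M(x,y).
\]
Since $y>\sqrt x$, we have $M(x,y)\gg \sqrt{x}/(\log x)^k$, so $M(x,y)\ll M(x,y)^2 x^{-1/2}(\log x)^k\ll \cE(x)M(x,y)^2$, which is (iii). Then (ii) follows from $\E(S^2)=(\E S)^2+\V S$ together with (i).

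The only delicate point is the bookkeeping in (i). We must check that the Mertens error at height $t$ and the block-variation errors $O(w/v)=O(x^{-1/2})$ are both $\ll\cE(x)$. We must also check that Lemma~\ref{lem:Dn-average} is applied with $v$ ranging up to $2x$, where $\cE(v)\asymp\cE(x)$ up to constants in the exponent, which is acceptable. Both facts hold, but the first is where the specific shape of $t(x)$ enters.
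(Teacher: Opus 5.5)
Your proof is correct. Part (i) is essentially the paper's argument: the same block decomposition, the Mertens case of Lemma~\ref{lem:PIT} for $\Theta_t^{-1}$, and Lemma~\ref{lem:Dn-average} for $\sum D_n$, after which the factors $\er^{\pm\gamma k}$ and $\log t(f_j(v))$ cancel. Where you differ is in (ii) and (iii), which you handle in the opposite order.

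\textbf{How the paper does (ii) and (iii).} The paper expands $\E\{(\sum X_n)^2\}$ over pairs of blocks and factors the off-diagonal terms $n\ne m$ using independence. It bounds the diagonal by $O(w_{\ell_1})$ and checks that this is negligible, which gives (ii). It then obtains (iii) by subtracting the square of (i).

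\textbf{How you do it.} You compute the variance exactly as $\sum_n D_nP_n(1-P_n)$, since all covariances vanish, and bound it by $\E\sum X_n\ll M(x,y)$. You then convert this to $\cE(x)M(x,y)^2$ using $M(x,y)\gg\sqrt{x}/(\log x)^k$, and (ii) follows from (i) and (iii).

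\textbf{Comparison.} The underlying content is the same, since the variance is precisely the diagonal discrepancy. Your ordering is shorter and yields the much stronger bound $\V\{\sum X_n\}\ll M(x,y)$. The paper's block-pair format is chosen to parallel Lemma~\ref{lem:2nd} for $\model_2$. There the $R_n$ are correlated, so the variance is not a sum of individual variances, and the off-diagonal terms must go through Lemma~\ref{lem:Rn1Rn2-expectation}. Your shortcut is specific to the full independence in $\model_1$ and would not transfer.

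\textbf{A small correction.} Your fallback claim that a leftover block contributes $O(x^{1/2})\ll\cE(x)M(x,y)$ is false when $y$ is close to $\sqrt{x}$. In that case $\cE(x)M(x,y)\asymp\cE(x)\sqrt{x}/(\log x)^k$, which is far smaller than a leftover of length comparable to $x^{1/2}$ can contribute. The fix is easy:
\begin{itemize}
\item a leftover of length $<(2x)^{1/3}$ contributes $O(x^{1/3})$, which is negligible;
\item otherwise the leftover is itself an admissible block for Lemma~\ref{lem:Dn-average}, or you merge it with its neighbour as you also suggest.
\end{itemize}
So the argument stands.
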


\begin{proof}
Partition the interval $(x,x+y]$ into $L$ subintervals
$(v_\ell, v_\ell+w_\ell]$, $1\le \ell\le L$, with
$x^{1/3} < w_\ell \le x^{2/3}$ for each $\ell$; this is possible
since $y > \sqrt{x}$.  Fix $\ell$, and let $v_\ell < n \le v_\ell + w_\ell$.
For each $j$ we have $f_j(n) = (1+O(w_\ell/v_\ell))f_j(v_\ell) =
(1+O(x^{-1/3}))f_j(v_\ell)$, and hence
\begin{align*}
\E\, R_n = \PR(R_n=1)
&= \big(1 + O(x^{-1/3}) \big) \prod_{j=1}^k
\frac{\Theta_{t(f_j(v_\ell))}^{-1}}{\log f_j(v_\ell)}\\
&= \big(1 + O(\cE(x)) \big) \prod_{j=1}^k
\frac{\er^{\gamma}\log t(f_j(v_\ell))}{\log f_j(v_\ell)},
\end{align*}
the second step following from Lemma~\ref{lem:PIT} with $f(\bx) = \bx$.
Combining this with Lemma~\ref{lem:Dn-average} yields
\begin{align*}
\E\bigg\{\sum_{v_\ell<n \le v_\ell+w_\ell}X_n\bigg\}
&= \big(1 + O(\cE(x))\big) \frac{\fS\, w_\ell}{\log f_1(v_\ell) \cdots \log f_k(v_\ell)}\\
&= \big(1 + O(\cE(x))\big)\, M(v_\ell, w_\ell).
\end{align*}
Summing over $\ell$ gives part $(i)$.

For any pair of block indices $\ell_1, \ell_2$, the independence of the
$R_n$ across $n$ gives
\[
\E\bigg\{\ssum{v_{\ell_1}<n \le v_{\ell_1}+w_{\ell_1} \\ v_{\ell_2}<m \le v_{\ell_2}+w_{\ell_2}} X_n X_m\bigg\}
= \E\bigg\{\sum_{v_{\ell_1}<n \le v_{\ell_1}+w_{\ell_1}}X_n\bigg\}\,
  \E\bigg\{\sum_{v_{\ell_2}<m \le v_{\ell_2}+w_{\ell_2}}X_m\bigg\}
+ O(w_{\ell_1})\mathbbm{1}_{\ell_1=\ell_2},
\]
where the $O(w_{\ell_1})$ term accounts for the diagonal contribution $n=m$
when $\ell_1=\ell_2$.  Since $M(v,w)\gg w/(\log v)^k$ and
$w_\ell \ge x^{1/3}$ for every $\ell$, this diagonal error is negligible, and the right side simplifies to
\[
\big(1 + O(\cE(x))\big)\, M(v_{\ell_1}, w_{\ell_1})\, M(v_{\ell_2}, w_{\ell_2}).
\]
Summing over all $\ell_1,\ell_2$ yields part $(ii)$.
Finally, part $(iii)$ follows from $(i)$ and $(ii)$ via
$\V\{S\} = \E\{S^2\} - (\E\{S\})^2$.
\end{proof}

\bigskip

\begin{proof}[Proof of Theorem~\ref{thm:main} for $r=1$]
Fix an admissible tuple $(f_1,\ldots,f_k)$, and let 
$K_0$ be a sufficiently large constant that depends only on $f_1,\ldots,f_k$.
Let $(x_m)_{m\ge 0}$ be the strictly increasing sequence defined by
\[
x_0 \defeq K_0, \qquad x_{m+1} \defeq x_m+\delta_m \quad (m\ge 0),
\qquad\text{where}\quad\delta_m\defeq x_m\,\er^{-(\log x_m)^{1/3}}.
\]
We have $x_m\to\infty$ as $m\to\infty$.  For each $m\ge 0$, let
$E_m$ be the event that
\[
\Big|\big|\{x_{m} < n\le x_{m+1}:f_1(n),\dots,f_k(n)\in\model_1\}\big|
-M(x_m,\delta_m)\Big|\ge x_m\,\er^{-10(\log x_m)^{1/3}}.
\]
Taking into account that
\be\label{eq:Mgap-bound}
M(x_m,\delta_m)\ll\frac{\delta_m}{(\log x_m)^k} \ll x_m\,\er^{-(\log x_m)^{1/3}},
\ee
Chebyshev's inequality and Lemma \ref{lem:M1-moments}\,$(iii)$ yield
\[
\P\,E_m \ll \frac{\cE(x_m)M(x_m,\delta_m)^2}{x_m^2\,\er^{-20(\log x_m)^{1/3}}}
\ll \er^{18(\log x_m)^{1/3}} \cE(x_m)
\ll \er^{-100(\log x_m)^{1/3}},
\]
where the last step uses the definition of $\cE(x_m)$
and holds for $K_0$ sufficiently large.
Since any dyadic interval $(X,2X]$ contains at most $O(\er^{(\log X)^{1/3}})$
points $x_m$, we have
\[
\sum_{X<x_m\le 2X}\P\,E_m\ll \er^{-99(\log X)^{1/3}}.
\]
Summing over $X\in\{2^j K_0:j\ge 0\}$, it follows that
$\sum_{m=0}^\infty \P\,E_m$ converges.
By the Borel--Cantelli lemma, almost surely only finitely many events $E_m$ occur.

Now suppose that $E_m$ fails for every $m\ge m_0$, and let $x>x_{m_0}$.
Define $m_1$ by $x_{m_1} < x \le x_{m_1+1}$, and split the interval
$[1,x]\subset\N$ into an initial block $[1,x_{m_0}]$,
the full blocks $(x_m,x_{m+1}]$ with $m_0\le m\le m_1-1$, and a final
block $(x_{m_1},x]$. Then
\dalign{
&\big|\{n\le x:f_1(n),\ldots,f_k(n)\in\model_1\}\big|\\
&\qquad=O(x_{m_0}+x-x_{m_1})
+\sum_{m_0\le m\le m_1-1}\Big\{M(x_m,\delta_m)
+O\big(x_m\,\er^{-10(\log x_m)^{1/3}}\big)\Big\}.
}
Since $x_{m_0}=O(1)$, $x-x_{m_1}\le\delta_{m_1}$, and
the first sum telescopes to $M(x_{m_1})-M(x_{m_0})$, 
\[
\big|\{n\le x:f_1(n),\ldots,f_k(n)\in\model_1\}\big|
=M(x_{m_1})+O\big(x_{m_1}\,\er^{-(\log x_{m_1})^{1/3}}\big).
\]
Finally, as $M(x)-M(x_{m_1})\le M(x_{m_1},\delta_{m_1})$, we may
use~\eqref{eq:Mgap-bound} to pass from $x_{m_1}$ to $x$,
and we conclude that
\[
\big|\{n\le x:f_1(n),\ldots,f_k(n)\in\model_1\}\big|
=M(x)+O\big(x\,\er^{-(\log x)^{1/3}}\big),
\]
which establishes~\eqref{eq:main}.
\end{proof}

\medskip

{\large\section{The random set $\model_2$}\label{sec:second-model}}

We establish first and second moment estimates for $R_n$,
defined in \eqref{eq:Rn-model2}. Combined with Lemma \ref{lem:Dn-average},
this gives first and second moments of sums of $X_n$.
The analysis is more involved than in Section~\ref{sec:first-model},
since for $m\ne n$ the variables $X_m$ and $X_n$ are correlated.

\medskip

\begin{lemma}\label{lem:Rn-expectation}
For sufficiently large $n$, we have
\[
\E\,R_n=\big\{1+O(\cE(n))\big\}
\,\er^{\gamma k}\prod_{j=1}^k\frac{\log t(f_j(n))}{\log f_j(n)}.
\]
\end{lemma}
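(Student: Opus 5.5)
Since $n$ is fixed, $R_n$ is determined by the independent residues $a_p$ for primes $t(f_1(n))<p\le z(f_k(n))$. For each such prime, let $J_p\defeq\{j: t(f_j(n))<p\le z(f_j(n))\}$ be the set of active polynomials. Then $R_n=1$ exactly when $a_p\not\equiv f_j(n)\pmod p$ for every $p$ and every $j\in J_p$. The residues $a_p$ are independent and uniform, so
\[
\E\,R_n=\prod_{p}\Big(1-\frac{|\{f_j(n)\bmod p: j\in J_p\}|}{p}\Big).
\]
The plan is to compare this with $\prod_j\prod_{t(f_j(n))<p\le z(f_j(n))}(1-1/p)$. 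Each factor of the latter equals $\Theta_{z(f_j(n))}/\Theta_{t(f_j(n))}$, and Lemma~\ref{lem:PIT} with $f(\bx)=\bx$ evaluates it as
\[
\frac{\log t(f_j(n))}{\log z(f_j(n))}\big(1+O(\er^{-c\sqrt{\log t(f_j(n))}})\big)
=\er^{\gamma}\frac{\log t(f_j(n))}{\log f_j(n)}\big(1+O(\cE(n))\big).
\]
Here $\sqrt{\log t}\asymp(\log n\log\log n)^{1/3}$, which beats $(\log n)^{1/3}(\log\log n)^{1/6}$, so the error is $O(\cE(n))$. The relation $\log z(x)=\er^{-\gamma}\log x$ gives the factor $\er^{\gamma}$.

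The remaining work, which is the main obstacle, is to control coincidences among the $f_j(n)$ modulo $p$. If the residues $f_j(n)\bmod p$ for $j\in J_p$ are distinct, the factor is exactly $1-|J_p|/p$. This equals $\prod_{j\in J_p}(1-1/p)$ up to a factor $1+O(1/p^2)$. Since all relevant $p$ exceed $t(f_1(n))$, the total of these discrepancies is $1+O(1/t(n))$, which is negligible.

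A collision $f_i(n)\equiv f_j(n)\pmod p$ with $i\ne j$ means $p\mid f_i(n)-f_j(n)$. This is a nonzero integer of size $O(n^{D})$, where $D=\max d_j$. It therefore has $O(\log n)$ prime factors exceeding $t(n)$. Each such collision changes one factor by at most
\[
\Big(1-\frac{m-1}{p}\Big)\Big(1-\frac{m}{p}\Big)^{-1}=1+O(1/p),
\]
where $m=|J_p|\le k$. The total multiplicative error is thus $\exp(O(\log n/t(n)))=1+O(1/t(n))$, again negligible against $\cE(n)$.

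Multiplying the factors over $j$ gives the stated formula. The only delicate checks are these: all primes involved exceed $t(f_1(n))>p_0$, so each factor is positive and bounded away from $0$ and the logarithm expansions are legitimate; and the PIT error $\er^{-c\sqrt{\log t}}$ is indeed $O(\cE(n))$.
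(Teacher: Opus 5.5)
Your proof is correct and follows essentially the paper's argument. You write $\E R_n$ as an Euler product of factors $1-\phi_p/p$. Off the $O(\log n)$ primes dividing $\prod_{i\ne j}(f_i(n)-f_j(n))$ you replace each factor by $(1-1/p)^{|J_p|}$ at cost $1+O(1/p^2)$, on those primes at cost $1+O(1/p)$, and you finish with Lemma~\ref{lem:PIT} for $f(\bx)=\bx$. Your regrouping $\prod_p(1-1/p)^{|J_p|}=\prod_j\Theta_{z(f_j(n))}/\Theta_{t(f_j(n))}$ is a tidier version of the paper's split into $P_1P_2P_3$ followed by telescoping, and it does not need the ordering $t_k<z_1$. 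One small slip: $\exp(O(\log n/t(n)))$ equals $1+O(\log n/t(n))$, not $1+O(1/t(n))$, though this is still $O(\cE(n))$.
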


\begin{proof}
Our approach is to write $\E\, R_n$ as a product over primes, grouped
according to which $f_j$ they sieve, and apply Lemma~\ref{lem:PIT}
to each piece.

We simplify the notation by  writing
\[
f_j\defeq f_j(n),\qquad
t_j\defeq t(f_j(n)),\qquad
z_j\defeq z(f_j(n)).
\]
By \eqref{fi-ordered}, we have (for sufficiently large $n$)
\[
t_1 < t_2 < \cdots < t_k < z_1 < \cdots < z_k.
\]
By the definition \eqref{eq:Rn-model2} of $R_n$, a prime $p$ participates in the sieving
condition for $f_j$ precisely when $t_j< p\le z_j$.  Splitting the range
$(t_1,z_k]$ at the points $t_2,\ldots,t_k,z_1,\ldots,z_{k-1}$ and taking
expectations, the independence of the $a_p\bmod p$ over distinct primes leads to
\[
\E\, R_n = P_1 P_2 P_3,
\]
where
\dalign{
P_1&\defeq\prod_{j=1}^{k-1}\prod_{t_j<p\le t_{j+1}}
\(1-\frac{\phi_{j,p}}{p}\),\\
P_2&\defeq\prod_{t_k<p\le z_1}
\(1-\frac{\phi_{k,p}}{p}\),\\
P_3&\defeq\prod_{j=1}^{k-1}\prod_{z_j<p\le z_{j+1}}
\(1-\frac{\phi'_{j,p}}{p}\),
}
with
\[
\phi_{j,p}\defeq\big|\{f_1,\ldots,f_j\}\bmod{p}\big|
\mand
\phi'_{j,p}\defeq\big|\{f_{j+1},\ldots,f_k\}\bmod{p}\big|.
\]
We estimate the three products individually.

Define
\[
\Delta\defeq\prod_{\bi \ne\bj}|f_{\bi} -f_{\bj}|.
\]
By \eqref{fi-ordered}, $\Delta\ne 0$, and also $\Delta=n^{O(1)}$, thus
$\Delta$ trivially has $O(\log n)$ distinct prime factors.
If $p\nmid \Delta$ then $f_i(n)\not\equiv f_j(n)\pmod{p}$ for all $i\ne j$, and thus
$\phi_{j,p}=j$.  For $p\mid\Delta$, $\phi_{j,p}\le j$.  Since $\Delta$ has $O(\log n)$
prime factors and $(1-j/p) = (1-1/p)^j(1+O(1/p^2))$, we obtain
\[
P_1 = \(1+O\pfrac{\log n}{t_1}\)\prod_{j=1}^{k-1}\,\prod_{t_j<p\le t_{j+1}}
\(1-\frac1p\)^{j}.
\]
Applying Lemma~\ref{lem:PIT} with $f(\bx)=\bx$ (Mertens' third theorem)
to estimate the inside product, and absorbing both the prefactor error
$O(\log n/t_1)$ and the error from Lemma~\ref{lem:PIT} into
$O(\cE(n))$, we get that
\[
P_1=\big\{1+O(\cE(n))\big\}\prod_{j=1}^{k-1} \pfrac{\log t_j}{\log t_{j+1}}^{j}
=\big\{1+O(\cE(n))\big\}\frac{(\log t_1)\cdots (\log t_{k-1})}{(\log t_k)^{k-1}}.
\]
Similarly, when $t_k<p\le z_1$ and $p\nmid \Delta$, $\phi_{k,p}=k$ and we deduce that
\[
P_2 = \big\{1+O(\cE(n))\big\} \pfrac{\log t_k}{\log z_1}^k.
\]
Finally, when $z_j <p \le z_{j+1}$ and $p\nmid \Delta$, $\phi'_{j,p}=k-j$,
and it follows that
\[
P_3 = \big\{1+O(\cE(n))\big\} \prod_{j=1}^{k-1} \pfrac{\log z_j}{\log z_{j+1}}^{k-j}=
\big\{1+O(\cE(n))\big\} \frac{(\log f_1)^{k-1}}{(\log f_2)\cdots (\log f_k)}.
\]
Multiplying the estimates for $P_1$, $P_2$, and $P_3$, the intermediate
logarithms telescope and we obtain
\[
P_1 P_2 P_3 = \big\{1+O(\cE(n))\big\}\prod_{j=1}^k \frac{\log t_j}{\log z_j}
= \big\{1+O(\cE(n))\big\}\,\er^{\gamma k}\prod_{j=1}^k \frac{\log t(f_j(n))}{\log f_j(n)},
\]
using $\log z_j = \log f_j(n)/\er^{\gamma}$ in the last step.
\end{proof}

\begin{lemma}[First moment estimate]\label{lem:1st}
With $x$ sufficiently large in terms of $f_1,\dots,f_k$,
and $\sqrt{x}<y\le x$, we have
\[
\E\bigg\{\sum_{x<n\le x+y}X_n\bigg\}
=\big\{1+O(\cE(x))\big\}M(x,y).
\]
\end{lemma}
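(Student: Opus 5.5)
We follow the proof of Lemma~\ref{lem:M1-moments}\,$(i)$ almost verbatim, with Lemma~\ref{lem:Rn-expectation} taking the place of the explicit Bernoulli probability \eqref{eq:Rn-model1}. Since $D_n$ is deterministic, linearity of expectation gives
\[
\E\bigg\{\sum_{x<n\le x+y}X_n\bigg\}=\sum_{x<n\le x+y}D_n\,\E R_n ,
\]
and the correlations between the $R_n$ play no role at this stage. As before, we partition $(x,x+y]$ into $L$ blocks $(v_\ell,v_\ell+w_\ell]$ with $x^{1/3}<w_\ell\le x^{2/3}$, which is possible because $y>\sqrt x$.

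On a fixed block, we first show that $\E R_n$ is essentially constant. For $v_\ell<n\le v_\ell+w_\ell$ we have $f_j(n)=(1+O(x^{-1/3}))f_j(v_\ell)$. Hence
\[
\log f_j(n)=\log f_j(v_\ell)+O(x^{-1/3}),
\]
and, by the definition \eqref{eq:tx defn}, also $\log t(f_j(n))=(1+O(x^{-1/3}))\log t(f_j(v_\ell))$. Moreover $\cE(n)\asymp\cE(x)$ throughout $(x,2x]$. Lemma~\ref{lem:Rn-expectation} therefore gives, uniformly in the block,
\[
\E R_n=\{1+O(\cE(x))\}\,\er^{\gamma k}\prod_{j=1}^k\frac{\log t(f_j(v_\ell))}{\log f_j(v_\ell)} .
\]
Pulling this constant out of the block sum and applying Lemma~\ref{lem:Dn-average} with $v=v_\ell$, $w=w_\ell$ (its hypotheses $v^{1/3}\le w\le v^{2/3}$ hold up to harmless adjustment of block lengths, since $v_\ell\asymp x$), the factors $\er^{\pm\gamma k}$ and $\prod_j\log t(f_j(v_\ell))$ cancel. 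We are left with
\[
\{1+O(\cE(x))\}\,\frac{\fS\,w_\ell}{\prod_j\log f_j(v_\ell)}=\{1+O(\cE(x))\}\,M(v_\ell,w_\ell),
\]
because the integrand of $M$ varies by a factor $1+O(x^{-1/3})$ across the block.

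Summing over $\ell$ gives the claim, since every error is a relative error $O(\cE(x))$ on nonnegative terms. The only points needing care are bookkeeping: the block lengths must satisfy the hypotheses of Lemma~\ref{lem:Dn-average} relative to $v_\ell$ (we choose $w_\ell\asymp x^{1/2}$, say), and $\cE(v_\ell)$ and $\cE(n)$ must be comparable to $\cE(x)$, which holds since $v_\ell,n\in(x,2x]$. No genuine obstacle arises; the substantive inputs are already contained in Lemmas~\ref{lem:Dn-average} and \ref{lem:Rn-expectation}.
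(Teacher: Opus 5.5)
Your proposal is correct and follows the paper's proof essentially step for step. You use linearity of expectation over $X_n=D_nR_n$, blocks of length about $\sqrt{x}$, Lemma~\ref{lem:Rn-expectation} to freeze $\E R_n$ on each block, and Lemma~\ref{lem:Dn-average} followed by summation. Your final choice $w_\ell\asymp x^{1/2}$ matches the paper's constraint $v^{1/3}\le w\le v^{1/2}$, and it secures the hypothesis $w\ge v_\ell^{1/3}$ of Lemma~\ref{lem:Dn-average}, which $w_\ell>x^{1/3}$ alone would not.
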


\begin{proof}
Subdivide the interval $(x,x+y]$ into blocks $(v,v+w]$, where each block has
width $w=w(v)$ satisfying $v^{1/3}\le w\le v^{1/2}$;
this is possible since $y\ge x^{1/2} > 2 x^{1/3}$.
For $v<n\le v+w$ and each $j$, 
\begin{align*}
\log t(f_j(n)) &= \big( \log t(f_j(v)) \big) \big( 1 +O(w/v) \big),\\
\log f_j(n) &= \big( \log f_j(v) \big) \big( 1 +O(w/v) \big).
\end{align*}
Since $w/v \le x^{-1/2} \ll \cE(x)$,
 Lemmas \ref{lem:Rn-expectation} and \ref{lem:Dn-average} imply
\begin{align*}
\E \bigg\{ \sum_{v<n\le v+w} X_n \bigg\} &= \big\{1+O(\cE(x))\big\} \frac{\fS w}{\log f_1(v)\cdots \log f_k(v)} \\
&=\big\{1+O(\cE(x))\big\} \fS \int_v^{v+w} \frac{\dd u}{\log f_1(u)\cdots \log f_k(u)}.
\end{align*}
Summing this over all blocks $(v,v+w]$ completes the proof.
\end{proof}

The second moment estimates follow the same basic strategy,
the principal new ingredient being an estimate of $\E\, R_{n_1} R_{n_2}$ for
pairs $n_1,n_2$ that lie within a factor of two of one another.  Most such pairs
are well-behaved, but a sparse set of ``bad'' pairs must be handled
separately. Let $\cD$ denote the set of $(n_1,n_2)$ for which
$f_i(n_1)=f_j(n_2)$ for some $i,j$.  In particular, $\cD$ contains the
diagonal $\{(n,n):n\in\N\}$.  For $(n_1,n_2)\notin\cD$ the variables
$R_{n_1}$ and $R_{n_2}$ are quasi-independent, as quantified
by Lemma~\ref{lem:Rn1Rn2-expectation} below; for pairs in $\cD$ this
fails, and such pairs must be treated using trivial bounds.

\begin{lemma}\label{lem:Rn1Rn2-expectation}
For $n_1$ sufficiently large, if $n_1 < n_2 \le 2n_1$ and $(n_1,n_2)\notin \cD$, then
\[
\E\, R_{n_1} R_{n_2} = \big\{1+O(\cE(n_1))\big\}
\er^{2\gamma k} \prod_{i=1}^2 \frac{\log t(f_1(n_i))\cdots \log t(f_k(n_i))}
{\log f_1(n_i) \cdots \log f_k(n_i)}.
\]
\end{lemma}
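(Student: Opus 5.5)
Following the proof of Lemma~\ref{lem:Rn-expectation}, we write $\E\,R_{n_1}R_{n_2}$ as a product over primes and use the independence of the $a_p$. Put $f_{i,j}\defeq f_j(n_i)$, $t_{i,j}\defeq t(f_{i,j})$ and $z_{i,j}\defeq z(f_{i,j})$ for $i=1,2$ and $1\le j\le k$. A prime $p$ sieves $f_{i,j}$ exactly when $t_{i,j}<p\le z_{i,j}$. Let $A_p$ be the set of pairs $(i,j)$ active at $p$. Then
\[
\E\,R_{n_1}R_{n_2}=\prod_p\Big(1-\frac{|\{f_{i,j}:(i,j)\in A_p\}\bmod p|}{p}\Big).
\]
We compare this with the product $\E R_{n_1}\cdot\E R_{n_2}$, which Lemma~\ref{lem:Rn-expectation} evaluates as the right side of the claimed formula, up to a factor $1+O(\cE(n_1))$ (note $\cE(n_2)\asymp\cE(n_1)$). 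That second product equals $\prod_p(1-\phi^{(1)}_p/p)(1-\phi^{(2)}_p/p)$, where $\phi^{(i)}_p$ counts the distinct residues among the $f_{i,j}$ active for $n_i$.

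Let $\Delta'\defeq\prod_{(i,j)\ne(i',j')}|f_{i,j}-f_{i',j'}|$. The hypothesis $(n_1,n_2)\notin\cD$, together with \eqref{fi-ordered} and $n_1\ne n_2$, makes all $2k$ values $f_{i,j}$ distinct, so $\Delta'\ne0$. Since $n_2\le 2n_1$, we have $\Delta'=n_1^{O(1)}$, so $\Delta'$ has $O(\log n_1)$ prime factors. For $p\nmid\Delta'$ the active values are pairwise incongruent mod $p$, and the local factor equals $1-|A_p|/p$. The corresponding factor in $\E R_{n_1}\E R_{n_2}$ is $(1-a/p)(1-b/p)$ with $a+b=|A_p|$, and these agree up to $1+O(1/p^2)$. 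Summing $1/p^2$ over $p>t_{1,1}$ gives an error $O(1/t_{1,1})$, which is far smaller than $\cE(n_1)$.

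For the $O(\log n_1)$ primes $p\mid\Delta'$ with $p>t_{1,1}$, each local factor lies between $1-2k/p$ and $1$. The ratio of the two factors is therefore $1+O(1/p)=1+O(1/t_{1,1})$. The product of these ratios is $\exp\{O(\log n_1/t_{1,1})\}=1+O(\cE(n_1))$, because $t_{1,1}\ge t(n_1)$ is superpolynomial in $\log n_1$. Primes $p\le t_{1,1}$ contribute nothing, since no $f_{i,j}$ is sieved by $a_p$ there. Multiplying everything together gives $\E R_{n_1}R_{n_2}=\{1+O(\cE(n_1))\}\E R_{n_1}\E R_{n_2}$. Substituting Lemma~\ref{lem:Rn-expectation} for each factor then yields the statement.

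The only real obstacle is justifying that coincidences between residues of $f_{1,j}$ and $f_{2,j'}$ occur only for primes dividing a nonzero integer of polynomial size. This is exactly where $(n_1,n_2)\notin\cD$ is needed: without it, some $f_i(n_1)-f_j(n_2)=0$ would be divisible by every prime, the local factors would differ at all $p$ up to $z$, and the correlation would be of order $\prod(1-1/p)^{-1}$, which is not negligible. The remaining bookkeeping is easier than in Lemma~\ref{lem:Rn-expectation}, because we compare ratios prime by prime rather than re-evaluating Mertens-type products. The interleaving of the ranges $(t_{i,j},z_{i,j}]$ is then irrelevant.
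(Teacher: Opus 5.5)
Your proof is correct and follows essentially the same route as the paper. Both arguments use $(n_1,n_2)\notin\cD$ to make the product of differences $\Delta$ nonzero and of polynomial size, so the local factor differs from $1-|\cJ_p|/p$ only at its $O(\log n_1)$ prime divisors, at a total cost of $1+O(\log n_1/t_{1,1})$; the only difference is cosmetic. You compare prime by prime with $\E R_{n_1}\,\E R_{n_2}$ and cite Lemma~\ref{lem:Rn-expectation} as a black box, whereas the paper splits $\prod_p(1-1/p)^{|\cJ_p|}$ into the two single-$n_i$ products and re-runs the Mertens evaluation.
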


\begin{proof}
We use the simplified notation:
\[
t_{i,j}\defeq t(f_j(n_i))\mand
z_{i,j}\defeq z(f_j(n_i)).
\]
By \eqref{fi-ordered}, we have
\[
t_{1,j}<t_{2,j}\mand z_{1,j}<z_{2,j} \qquad (1\le j\le k),
\]
and also
\begin{align*}
t_{1,1} &< \cdots < t_{1,k}<z_{1,1} < \cdots < z_{1,k}, \\
t_{2,1} &< \cdots < t_{2,k}<z_{2,1} < \cdots < z_{2,k}. 
\end{align*}
In particular, the conditions on $R_{n_1}$ and $R_{n_2}$ involve only
primes in $(t_{1,1},z_{2,k}]$.
For each relevant triple $(i,j,p)$, let
\[
R_{i,j,p} \defeq\ind{f_j(n_i)\not\equiv a_p\pmod{p}}
\]
be the indicator that the prime $p$ does not sieve the value $f_j(n_i)$.
Then
\[
R_{n_1}R_{n_2}= \prod_{i=1}^{2}\prod_{j=1}^k
\prod_{t_{i,j}<p\le z_{i,j}} R_{i,j,p}.
\]
Since $n_2\le 2n_1$, the definition \eqref{eq:tx defn}
yields the estimates
$\log t_{i,j}\asymp (\log n_1\,\log_2 n_1)^{2/3}$
and $z_{i,j}\order n_1^{d_j/\er^\gamma}$;
in particular, the $t_{i,j}$ are all of the same order of magnitude,
as are the $z_{i,j}$ for each fixed $j$.
The $R_{i,j,p}$ are independent from one prime to the next,
but they are correlated on the same
prime $p$ via the shared residue $a_p\bmod p$; taking
expectations, we get that
\[
\E\, R_{n_1}R_{n_2} = \prod_{t_{1,1}<p\le z_{2,k}}\E\,\Bigg\{
\sprod{(i,j)\in \cJ_p} R_{i,j,p} \Bigg\},
\]
where 
\[
\cJ_p\defeq\{(i,j):i\in\{1,2\},~
j\in\{1,\ldots,k\},~t_{i,j}<p\le z_{i,j}\}.
\]
Since $a_p \bmod p$ is uniform over $\mathbb{Z}/p\mathbb{Z}$,
the product $\prod_{(i,j)\in \cJ_p} R_{i,j,p}$ vanishes if and 
only if $a_p$ hits one of the residues
$\{f_j(n_i) \bmod p : (i,j) \in \cJ_p\}$; therefore,
\[
\E\,\Bigg\{
\sprod{(i,j)\in\cJ_p} R_{i,j,p}
\Bigg\}=1-\frac{\psi_p}{p},\qquad\text{where}\quad
\psi_p\defeq\big|\{f_j(n_i)\bmod{p}:(i,j)\in\cJ_p\}\big|.
\]
We have $\psi_p=|\cJ_p|$ whenever the $2k$ residues
$\{f_j(n_i) \bmod p : (i,j) \in \cJ_p\}$ are pairwise distinct
modulo $p$, which fails only when $p$ divides
\[
\Delta\defeq
\prod_{\bi \ne\bj}|f_{\bi} (n_1)-f_{\bj}(n_1)|
\cdot
\prod_{\bi \ne\bj}|f_{\bi} (n_2)-f_{\bj}(n_2)|
\cdot
\prod_{\bi ,\bj}
\big|f_{\bi} (n_1)-f_{\bj}(n_2)\big|.
\]
The first two factors are nonzero by \eqref{fi-ordered}, and the third is
nonzero for $(n_1,n_2)\not\in\cD$; as in the proof of
Lemma \ref{lem:Rn-expectation}, $|\Delta|=n_1^{O(1)}$ and so $\Delta$ has
$O(\log n_1)$ distinct prime factors.  Since $\psi_p\le 2k$ for all $p$,
the same expansion used in Lemma~\ref{lem:Rn-expectation} yields
\begin{equation}\label{second-mom-Jp}
\E\, R_{n_1}R_{n_2}= \( 1 + O\pfrac{\log n_1}{t_{1,1}}\)
\prod_{t_{1,1}<p\le z_{2,k}}\(1-\frac{1}{p}\)^{|\cJ_p|}.
\end{equation}
We write the product in \eqref{second-mom-Jp} as 
\[
\prod_{i=1}^2 \Bigg\{ \prod_{j=1}^{k-1} \prod_{t_{i,j}<p\le t_{i,j+1}} \(1-\frac{1}{p}\)^{j} \Bigg\}
\prod_{t_{i,k}<p\le z_{i,1}} \(1-\frac{1}{p}\)^k \Bigg\{ \prod_{j=1}^{k-1}  \prod_{z_{i,j}<p\le z_{i,j+1}}
\(1-\frac{1}{p}\)^{k-j} \Bigg\}.
\]
Since $\log t_{1,j}\sim\log t_{2,j}$ and $\log z_{1,j}\sim\log z_{2,j}$,
this agrees with \eqref{second-mom-Jp} up to $1+O(\cE(n_1))$.
As in the proof of Lemma \ref{lem:Rn-expectation}, using Lemma \ref{lem:PIT} 
proves the lemma.
\end{proof}

\begin{lemma}[Second moment estimate]\label{lem:2nd}
With $x$ sufficiently large in terms of $f_1,\dots,f_k$,
and $\sqrt{x}<y\le x$, we have
\[
\E\bigg\{\sum_{x<n_1, n_2\le x+y}X_{n_1}X_{n_2}\bigg\}=
\big\{1+O(\cE(x))\big\}M(x,y)^2,
\]
where $M(x,y)\defeq M(x+y)-M(x)$ as before.
\end{lemma}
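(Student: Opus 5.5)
The plan is to mirror the proof of Lemma~\ref{lem:1st}, now for pairs of blocks. Subdivide $(x,x+y]$ into blocks $(v,v+w]$ with $v^{1/3}\le w\le v^{1/2}$, and write the double sum as a sum over ordered pairs of blocks $(B_1,B_2)$ of
\[
\E\sum_{n_1\in B_1}\sum_{n_2\in B_2} D_{n_1}D_{n_2}R_{n_1}R_{n_2}.
\]
Since $x<n_1,n_2\le 2x$, every pair satisfies $n_1<n_2\le 2n_1$ or the reverse, or $n_1=n_2$. So Lemma~\ref{lem:Rn1Rn2-expectation} applies, by symmetry, to every pair with $(n_1,n_2)\notin\cD$.

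\textbf{Main pairs.} For $n_i\in B_i=(v_i,v_i+w_i]$ with $(n_1,n_2)\notin\cD$, Lemma~\ref{lem:Rn1Rn2-expectation} gives a value of $\E R_{n_1}R_{n_2}$ that depends on $n_1,n_2$ only through $\log t(f_j(n_i))$ and $\log f_j(n_i)$. As in Lemma~\ref{lem:1st}, these can be replaced by their values at $v_i$ at a cost of a factor $1+O(w_i/v_i)=1+O(\cE(x))$. Hence, up to $1+O(\cE(x))$, $\E R_{n_1}R_{n_2}$ equals a constant $c(B_1)c(B_2)$ with
\[
c(B_i)=\er^{\gamma k}\prod_j \frac{\log t(f_j(v_i))}{\log f_j(v_i)}.
\]
Temporarily restoring the excluded pairs in $\cD$, the main sum factors as
\[
c(B_1)c(B_2)\Big(\sum_{n_1\in B_1}D_{n_1}\Big)\Big(\sum_{n_2\in B_2}D_{n_2}\Big).
\]
By Lemma~\ref{lem:Dn-average}, exactly as in the proof of Lemma~\ref{lem:1st}, this is $\{1+O(\cE(x))\}M(v_1,w_1)M(v_2,w_2)$. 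Summing over all pairs of blocks gives $\{1+O(\cE(x))\}M(x,y)^2$.

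\textbf{Pairs in $\cD$.} We must bound the contribution of pairs in $\cD$, both the ones added back above and the ones actually present, using the trivial bound $X_{n_1}X_{n_2}\le 1$. For fixed $n_1$ and fixed $(i,j)$, the equation $f_j(n_2)=f_i(n_1)$ has at most $d_j$ solutions $n_2$, since $f_j$ is nonconstant (indeed strictly increasing on $\N$). So each $n_1$ has $O(1)$ partners, and the $\cD$ contribution is $O(y)$. The added-back main terms for these pairs are likewise $O(y)$, since each is $O(1)$. Since $M(x,y)\gg y/(\log x)^k$ and $y>\sqrt x$, we get
\[
y\ll M(x,y)^2(\log x)^{2k}/y\ll M(x,y)^2\,x^{-1/2}(\log x)^{2k},
\]
which is $O(\cE(x)M(x,y)^2)$.

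The step I expect to need the most care is the factorization in the main pairs: the error in Lemma~\ref{lem:Rn1Rn2-expectation} is only $1+O(\cE(n_1))$ with $n_1$ the smaller index. Since $n_1\asymp x$ throughout, this is uniform and gives $O(\cE(x))$, but I would check that the block-localization errors, the uniformity of the implied constants across pairs of blocks, and the swap of the roles of $n_1$ and $n_2$ when $n_2<n_1$ all fit within $\cE(x)$. Everything else is bookkeeping identical to Lemma~\ref{lem:1st}.
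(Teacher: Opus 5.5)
Your proposal is correct and follows essentially the same route as the paper. Both arguments split into pairs of blocks, apply Lemma~\ref{lem:Rn1Rn2-expectation} together with Lemma~\ref{lem:Dn-average} to pairs off $\cD$, and bound the $O(1)$ partners per $n_1$ in $\cD$ trivially, absorbing them through $M(v,w)\gg w/(\log x)^k$ and a power-of-$x$ saving. The differences are cosmetic: you absorb the $\cD$-contribution globally as $O(y)\ll x^{-1/2}(\log x)^{2k}M(x,y)^2$ instead of per block pair as $O(w_{\ell_1})$, and you spell out the symmetry of $\cD$ and of $\E\,R_{n_1}R_{n_2}$ that lets Lemma~\ref{lem:Rn1Rn2-expectation} cover $n_2<n_1$.
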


\begin{proof}
As in the proof of Lemma \ref{lem:M1-moments}, we subdivide the interval
$(x,x+y]$ into $L$ blocks $(v_\ell,v_\ell+w_\ell]$
with $x^{1/3}<w_\ell \le x^{2/3}$ for each $\ell$. For each pair $(n_1,n_2)$,
if $n_i\in(v_{\ell_i},v_{\ell_i}+w_{\ell_i}]$ for $i=1,2$,
then we have for each $j$:
\dalign{
\log f_j(n_i)&=\big(1+O(x^{-1/3})\big)\log f_j(v_{\ell_i}),\\
\log t(f_j(n_i))&=\big(1+O(x^{-1/3})\big)\log t(f_j(v_{\ell_i})).
}
For a given $n_1$, there are $O(1)$ choices of $n_2$ with $(n_1,n_2)\in\cD$,
hence at most $O(w_{\ell_1})$ such pairs in the block, and each contributes
$\E\, R_{n_1}R_{n_2}\le 1$ trivially.
Combining Lemmas \ref{lem:Dn-average} and \ref{lem:Rn1Rn2-expectation}
for the remaining pairs, we find that
\[
\E\bigg\{\ssum{v_{\ell_1}<n_1\le v_{\ell_1}+w_{\ell_1}
\\v_{\ell_2}<n_2\le v_{\ell_2}+w_{\ell_2}}X_{n_1}X_{n_2}\bigg\}
=\big\{1+O(\cE(x))\big\}M(v_{\ell_1},w_{\ell_1})
M(v_{\ell_2},w_{\ell_2})+O(w_{\ell_1}).
\]
The term $O(w_{\ell_1})$ can be omitted
since $M(v,w)\gg w/(\log x)^k$ and $w_{\ell_1},w_{\ell_2}>x^{1/3}$,
and therefore
\[
M(v_{\ell_1},w_{\ell_1})M(v_{\ell_2},w_{\ell_2})
\gg \frac{w_{\ell_1} w_{\ell_2}}{(\log x)^{2k}}
\gg \frac{w_{\ell_1}x^{1/3}}{(\log x)^{2k}}\gg\frac{w_{\ell_1}}{\cE(x)}
\]
since $\cE(x)$ decays slower than any fixed power of $x$.
The proposition follows on summing over $\ell_1$ and $\ell_2$
separately and using
$\sum_{\ell=1}^L M(v_\ell,w_\ell)= M(x,y)$.
\end{proof}

Combining Lemmas \ref{lem:1st} and \ref{lem:2nd},
we obtain the desired variance bound.

\begin{corollary}\label{cor:variance}
Under the hypotheses of Lemma~\ref{lem:2nd}, we have
\[
\V\bigg\{\sum_{x<n\le x+y}X_n\bigg\}\ll \cE(x) M(x,y)^2.
\]
\end{corollary}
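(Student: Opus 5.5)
The plan is to argue exactly as in part $(iii)$ of Lemma~\ref{lem:M1-moments}. Write $S\defeq\sum_{x<n\le x+y}X_n$ and use the identity
\[
\V\{S\}=\E\{S^2\}-(\E\{S\})^2 .
\]
Since $S^2=\sum_{x<n_1,n_2\le x+y}X_{n_1}X_{n_2}$, Lemma~\ref{lem:2nd} gives
\[
\E\{S^2\}=\big\{1+O(\cE(x))\big\}M(x,y)^2 .
\]
Squaring the estimate in Lemma~\ref{lem:1st} gives
\[
(\E\{S\})^2=\big\{1+O(\cE(x))\big\}^2M(x,y)^2=\big\{1+O(\cE(x))\big\}M(x,y)^2 .
\]
This last step uses $\cE(x)\le 1$, so that $(1+O(\cE))^2=1+O(\cE)$.

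Subtracting, the main terms $M(x,y)^2$ cancel exactly, and what remains is $O(\cE(x))M(x,y)^2$, as claimed.

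The only point needing care is that both lemmas are applied under identical hypotheses ($x$ large, $\sqrt x<y\le x$) and to the same quantity $M(x,y)$. Then the two error terms are relative errors against the same main term, and their difference is controlled by the larger of them. I expect no genuine obstacle here: all the real work, namely the quasi-independence of the $R_n$ for pairs outside $\cD$ and the treatment of the sparse bad pairs in $\cD$, has already been done in Lemmas~\ref{lem:Rn1Rn2-expectation} and~\ref{lem:2nd}.
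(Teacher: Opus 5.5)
Your proposal is correct and is the paper's own argument: the paper obtains the corollary by combining Lemmas~\ref{lem:1st} and~\ref{lem:2nd} through $\V\{S\}=\E\{S^2\}-(\E\{S\})^2$, exactly as in part $(iii)$ of Lemma~\ref{lem:M1-moments}. Your observation that $\cE(x)\le 1$ gives $\{1+O(\cE(x))\}^2=1+O(\cE(x))$ is the only extra detail needed, and you handle it correctly.
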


By an identical argument to that in the $r=1$ case,
combining Corollary~\ref{cor:variance}
with the Borel--Cantelli lemma establishes Theorem \ref{thm:main} when $r=2$.

{\large\section{Remarks on polynomials in several variables}\label{sec:multivariate}}

The Bateman--Horn conjecture (BHC), as stated in \S\ref{sec:background},
is restricted to polynomials in one variable. There are heuristic
extensions to tuples of polynomials $F_1,\ldots,F_k\in\Z[\bx_1,\ldots,\bx_m]$;
see, e.g., the appendix of~\cite{DS19}, particularly Conjecture~A.3.
A great number of special cases are known to hold, many of them classical:
counting primes of the form $x^2+y^2$, problems of Waring and Waring--Goldbach
type, and three-term arithmetic progressions of primes. There is a great
deal of current work on problems where there are enough variables for
the circle method and techniques from arithmetic geometry and the
geometry of numbers to be effective; see, for instance, the fundamental
work of Heath-Brown~\cite{HB02}. We highlight a few striking examples
where the number of variables is ``small'', starting with the theorems
of Friedlander and Iwaniec~\cite{FI98} and of Heath-Brown~\cite{HB01},
that the polynomials $x^2+y^4$ and $x^3+2y^3$, respectively, each
capture infinitely many primes. Also notable are the works of
Balog~\cite{Balog2}, Green and Tao~\cite{GT1,GT2}, and Green, Tao
and Ziegler~\cite{GTZ} on prime solutions of systems of linear equations.

The methods of the present paper extend to the multivariate setting,
yielding an analog of~\eqref{eq:main} (as formulated in~\cite{DS19})
that holds almost surely for every admissible tuple of multivariate
polynomials $(F_1,\ldots,F_k)$. There are additional technicalities;
for example, one must restrict $(\bx_1,\ldots,\bx_m)$ to expanding
regions on which the leading form of each $F_i$ is positive. But the
arguments above are robust enough to handle this. In particular, the
prime number theorem for schemes over $\Z$ 
replaces Lemma~\ref{lem:PIT}, and the disjointness~\eqref{eq:Z-disjoint}
holds for $F_i,F_j$ that are coprime in $\Q[\bx_1,\ldots,\bx_m]$.


\begin{thebibliography}{99}

\bibitem{Balog} \textsc{Antal Balog},
\emph{The prime $k$-tuplets conjecture on average},
 Analytic number theory (Allerton Park, IL, 1989.  Bruce C. Berndt, Harold G. Diamond, Heini Halberstam and Adolf Hildebrand, eds.), 47--75, Progress in Mathematics, 85, Birkh\"auser Boston, Boston, MA, 1990.

\bibitem{Balog2}\textsc{Antal Balog},
\emph{Linear equations in primes.}
Mathematika \textbf{39} (1992), no. 2, 367--378.

\bibitem{B57}
\textsc{V.~Bouniakowsky},
\emph{Sur les diviseurs num\'eriques invariables des fonctions rationnelles enti\`eres},
M\'em.\ Acad.\ Sci.\ St.\ P\'etersbourg, S\'er.~6, \textbf{6} (1857), 305--329.

\bibitem{BanksFordTao}
\textsc{William Banks, Kevin Ford, and Terence Tao},
\emph{Large prime gaps and probabilistic models},
Inventiones\ Mathematicae\ \textbf{233} (2023), no.~3, 1471--1518.

\bibitem{BH62}
\textsc{Paul~T.~Bateman and Roger~A.~Horn},
\emph{A heuristic asymptotic formula concerning the distribution of prime numbers},
Mathematics of Computation\ \textbf{16} (1962), 363--367.

\bibitem{BH65}
\textsc{Paul~T.~Bateman and Roger~A.~Horn},
\emph{Primes represented by irreducible polynomials in one variable},
Proceedings of the Symposium in Pure Mathematics, Vol.~VIII, American\ Mathematical\ Society, Providence, RI, 1965, pp.~119--132.

\bibitem{BST}
\textsc{Tim Browning, Efthymios Sofos and Joni Ter\"{a}v\"{a}inen},
\emph{Bateman-Horn, polynomial Chowla and the Hasse principle with probability 1},
preprint.  \url{arxiv.org/2212.10373}

\bibitem{CM}
\textsc{Alina~C.~Cojocaru and M.~Ram~Murty},
\emph{An Introduction to Sieve Methods and their Applications},
London Mathematical\ Society\ Student Texts, vol.~66, Cambridge University\ Press, Cambridge, 2006.

\bibitem{Cramer}
\textsc{Harald Cram\'er},
\emph{Some theorems concerning prime numbers},
Acta Arithmetica \textbf{2} (1936), no.~1, 23--46.

\bibitem{Dickson}
\textsc{L.~E.~Dickson},
\emph{A new extension of Dirichlet's theorem on prime numbers},
Messenger of Mathematics\ \textbf{33} (1904), 155--161.

\bibitem{DS19}
\textsc{Kevin Destagnol and Efthymios Sofos},
\emph{Rational points and prime values of polynomials in moderately many variables},
Bulletin des Sciences Math\'ematiques\ \textbf{156} (2019), 102794.

\bibitem{Ford-sieve-notes}
\textsc{Kevin Ford},
\emph{Sieve methods lecture notes},
available at \url{https://ford126.web.illinois.edu/sieve2023.pdf}.

\bibitem{FI98}
\textsc{John~Friedlander and Henryk~Iwaniec},
\emph{The polynomial $X^2+Y^4$ captures its primes},
Annals of Mathematics (2)\ \textbf{148} (1998), no.~3, 945--1040.

\bibitem{FI}
\textsc{John~Friedlander and Henryk~Iwaniec},
\emph{Opera de Cribro},
American\ Mathematical\ Society\ Colloq.\ Publ., vol.~57, 
 Providence, RI, 2010.

\bibitem{Granville}
\textsc{Andrew Granville}, 
\emph{Harald Cram\'er and the distribution of prime numbers},
Harald Cram\'er Symposium (Stockholm, 1993).
\emph{Scandinavian Actuarial\ Journal} (1995), no.~1, 12--28.

\bibitem{GT1} \textsc{Ben~J.~Green and Terence~C.~Tao}, \emph{Linear equations in primes,} 
Annals of Mathematics \textbf{171} (2010), no. 3, 1753--1850.

\bibitem{GT2} \textsc{Ben~J.~Green and Terence~C.~Tao}, \emph{The quantitative behaviour of polynomial orbits on nilmanifolds}, Annals of Mathematics \textbf{175} (2012), no. 2, 465--540.

\bibitem{GTZ}  \textsc{Ben~J.~Green, Terence~C.~Tao and Tamar~Ziegler}, 
\emph{An inverse theorem for the Gowers $U^{s+1}[N]$-norm},
Annals of Mathematics \textbf{176} (2012), 1231--1372.

\bibitem{HL23}
\textsc{G.~H.~Hardy and J.~E.~Littlewood},
\emph{Some problems of ``Partitio numerorum''; III: On the expression of a number as a sum of primes},
Acta Mathematica\ \textbf{44} (1923), 1--70.

\bibitem{HB82}
\textsc{D.~R.~Heath-Brown},
\emph{Gaps between primes, and the pair correlation of zeros of the zeta-function},
Acta Arithmetica\ \textbf{41} (1982), no.~1, 85--99.

\bibitem{HB01}
\textsc{D.~R.~Heath-Brown},
\emph{Primes represented by $x^3+2y^3$},
Acta Mathematica\ \textbf{186} (2001), no.~1, 1--84.

\bibitem{HB02}
\textsc{D.~R.~Heath-Brown},
\emph{The density of rational points on curves and surfaces.}
Annals of Mathematics (2) {\bf 155} (2002), no. 2, 553--595.

\bibitem{Kawada}
\textsc{K.~Kawada}, \emph{A Montgomery-Hooley type theorem for prime $k$-tuplets},
Acta Mathematica Hungarica \textbf{66} (1995), no. 3, 177--200.

\bibitem{KWX}
\textsc{Noah Kravitz, Katherine Woo and Max Wenqiang Xu},
\emph{The distribution of prime values of random polynomials},
preprint.  \url{arxiv.org/2512.03292}


\bibitem{Lavrik}
\textsc{A. F. Lavrik}, \emph{On the theory of distribution of primes based on I. M.
Vinogradov's method of trigonometric sums}, Trudy Mat. Steklov \textbf{64} (1961), 90--125.
(Russian)

\bibitem{MP}
\textsc{Helmut Maier and Carl Pomerance},
\emph{Unusually large gaps between primes}, 
Transactions of the American Mathematical Society
\textbf{322} (1990), No. 1, 210--237.


\bibitem{SkSo}
\textsc{Alexei Skorobogatov and Efthymios Sofos},
\emph{Schinzel hypothesis on average and rational points},
Inventiones Mathematicae \textbf{231} (2023), no. 2, 673--739.

\bibitem{SS58}
\textsc{A.~Schinzel and W.~Sierpi\'nski},
\emph{Sur certaines hypoth\`eses concernant les nombres premiers},
Acta Arithmetica\ \textbf{4} (1958), 185--208.

\bibitem{Tafula}
\textsc{Christian T\'afula},
\emph{A note on the Cram\'er--Granville model},
Archiv der Mathematik \textbf{126} (2026), 275--283.

\bibitem{Chudakov}
\textsc{N.~G.~Tchudakoff}, \emph{On Goldbach--Vinogradov's theorem}, Annals of Mathematics
\textbf{48} (1947), 515--545.


\end{thebibliography}
\end{document}